\newtheorem{theorem}{Theorem}[section]
\newtheorem{lemma}[theorem]{Lemma}
\newtheorem{proposition}[theorem]{Proposition}
\newtheorem{corollary}[theorem]{Corollary}
\newenvironment{proof}[1][Proof]{\noindent\textbf{#1.} }{\ \rule{0.5em}{0.5em}}
\begin{document}

\title{\bf Existence of multi-bump solutions for a class of elliptic problems involving the biharmonic operator}

\author{Claudianor O. Alves\thanks{C. O. Alves was partially supported by CNPq/Brazil
		301807/2013-2 and INCT-MAT, coalves@dme.ufcg.edu.br}\, , \, \  Al\^{a}nnio B. N\'{o}brega \thanks{alannio@dme.ufcg.edu.br}\vspace{2mm}
	\and {\small  Universidade Federal de Campina Grande} \\ {\small Unidade Acad\^emica de Matem\'{a}tica} \\ {\small CEP: 58429-900, Campina Grande - Pb, Brazil}\\}

\date{}
\maketitle

\begin{abstract}
	Using variational methods, we establish existence of multi-bump solutions for the
	following class of problems 
	$$
	\left\{
	\begin{array}{l}
	\Delta^2 u  +(\lambda V(x)+1)u  =  f(u), \quad \mbox{in} \quad \mathbb{R}^{N},\\
	u \in H^{2}(\mathbb{R}^{N}),
	\end{array}
	\right.
	$$
	where $N \geq 1$,  $\Delta^2$ is the biharmonic operator, $f$ is a continuous
	function with subcritical growth and $V : \mathbb{R}^N \rightarrow \mathbb{R}$ is a continuous function
	verifying some conditions.

	\vspace{0.3cm}
	
	\noindent{\bf Mathematics Subject Classifications (2010):} 35J20,
	35J65
	
	\vspace{0.3cm}
	
	\noindent {\bf Keywords:}  Biharmonic operator, Multi-bump
	solution, Variational methods.
\end{abstract}

%%%%%%%%%%%%%%%%%%%%%%%%%%%%%%%%%%%%%%%%%%%%%%%%%%%%%%%%%%%%%%%%%%%%%%%%%%%%%%%%%INTRODUÇÃO%%%%%%%%%%%%%%%%%%%%%%%%%%%%%%%%%%%%%%%%%%%%%%%%%%%%%%%%%%%%%%%%%%%%%%%%%%%%%%%%%%%%%%%%%%%%%%%%%%%%%%%%%%%%%%%%%%%%%%%%%%%%%%%%%%%%%%%%%%%%%%%%%%%%%%%%%%%%%%%%%%%%%%%%%%%%%%%%%%%%%%%%%%%%%%%%%%%%%%%%%%%%%%%%%%%%%%%%%%%%%%%%%%%%%%%%%%%%%%%%%%%%%%%%%%%%%%%%%%%%%%%%%%%%%%%%%%%%%%%%%%%%%%%%%%%%%%%
\section{Introduction}

In this paper,  we are concerned with the existence of multi-bump solutions for the following class of problems
\begin{equation}\label{1}
\left\{ \begin{array}{cc}
\Delta^2 u  +(\lambda V(x)+1)u  & =  f(u), \quad \mbox{in} \quad \mathbb{R}^{N}, \\
u \in H^2(\mathbb{R}^N);\,\,\,\,\,\,\,\,\,\,\,\,\,\,\,\,&  
\end{array}
\right.
\end{equation}
where $N \geq 1$, $\Delta^{2}$ denotes the biharmonic operator, $\lambda >0$ is a positive parameter and $f:\mathbb{R} \rightarrow \mathbb{R}$ is a $C^1$ function verifying  the following hypotheses:
\begin{description}
	\item[$(f_1)$] $f(0)=f'(0)=0.$
	\item[$(f_2)$] $\liminf_{t \rightarrow +\infty} \frac{\left|f'(t)\right|}{\left|t\right|^{q-2}}<+\infty,
	$ for $q \in (2,2_*)$ where $$2_*=
	\left\{ \begin{array}{c}
	\frac{2N}{N-4},\quad N\geq 5 \\
	+\infty, \quad 1\leq N \leq 4  .
	\end{array}
	\right.$$
	\item[$(f_3)$] There is $\theta >2$ such that 
	$$
	0<\theta F(t) \leq f(t) t,\quad \mbox{for} \quad t\neq 0.
	$$
	\item[$(f_4)$] $\frac{f(t)}{\left|t\right|}$ is an increasing function for $t\neq 0$.
\end{description}

Related to the potential $V:\mathbb{R}^N \rightarrow
\mathbb{R}$, we assume the following assumptions :
\begin{description}
	\item[$(V_1)$]  $V(x)\geq 0, \ \forall \ x\in \mathbb{R}^N$;
	\item[$(V_2)$]  $\Omega= int V^{-1}(\{0\})$ is a non-empty bounded open set with smooth boundary $\partial \Omega$. Moreover, $\Omega$ has $k$ connected components, more precisely,
	\begin{itemize}
		\item $\Omega=\bigcup_{j=1}^{k}\Omega_j;$
		\item $dist(\Omega_i,\Omega_j)>0,\ i\neq j.$
	\end{itemize} 
	\item[$(V_3)$]  There is $M_0>0$ such that $ |\{ x\in \mathbb{R}^N ;\, V(x)\leq
	M_0\}|<+\infty.$
\end{description}
Hereafter, if $A \subset \mathbb{R}^{N}$ is a mensurable set, $|A|$ denotes its Lebesgue's measure.

In the last years, problems involving the biharmonic operator have been studied by many researchers, in part because this operator helps to describe the mechanical vibrations of an elastic plate, which among other things describes the traveling waves in a suspension bridge, see \cite{BDF,FG,G,GK,LM}. On the other hand, the biharmonic operator draws attention by the difficulties encountered when trying to adapt known results for the Laplacian, for example, we cannot always rely on a maximum principle, and also, if $u$ belongs $ H ^ 2 (A)$,  we cannot claim that $u ^ \pm$ belong to $ H ^ 2 (A)$.  Recently, many authors have studied various problems with the biharmonic operator, see for example, \cite{BG,JQ,P1,P2,YT,ZWZ}. However, related to the existence
of multi-bump solutions for an equation as (\ref{1}), as far as we
know, there is no results in this direction.

In \cite{D&T}, Ding and Tanaka have considered the problem
\begin{equation}\label{2}
\left\{ \begin{array}{l}
-\Delta u  +(\lambda V(x)+Z(x))u  = u^p, \quad \mbox{in} \quad \mathbb{R}^{N}, \\
u>0,\ \mbox{in}\ \mathbb{R}^N,
\end{array}
\right.
\end{equation}
with $p \in \left(1,\frac{N+2}{N-2}\right)$  and $ N \geq 3$. In that paper, it was showed that the problem (\ref{2}) has at least $2^k-1$ solutions for $\lambda$ large enough, which are called multi-bump solutions. These solutions have the following characteristics : \\

\noindent For each non-empty subset $\Gamma \subset \{1,2,\cdots,k\}$  and $\varepsilon>0$ fixed, there is a $\lambda^*>0$ such that, (\ref{2}) possesses a solution $u_{\lambda}$, for $\lambda \geq \lambda^*=\lambda^*(\varepsilon)$, satisfying:
$$
\left| \int_{\Omega_j}\left[\left|\nabla u_{\lambda}\right|^2+(\lambda V(x)+Z(x))u_{\lambda}^2\right]-\left( \frac{1}{2}-\frac{1}{p+1}\right)^{-1}c_j\right|< \varepsilon,\ \forall j \in \Gamma
$$
and
$$\int_{\mathbb{R}^N\setminus \Omega_{\Gamma}}\left[ \left| \nabla u_{\lambda}\right|^2+ u_{\lambda}^2\right]dx< \varepsilon,$$
where $\Omega_{\Gamma}= \bigcup_{j \in \Gamma}\Omega_j$ and $c_j$ is the
minimax level of the energy functional related to the problem
\begin{equation}\label{3}
\left\{ \begin{array}{c}
- \Delta u  +Z(x)u  = u^p, \,\, \mbox{in} \,\, \Omega_j, \\
u>0, \,\, \mbox{in} \,\, \Omega_j, \\
u=0,\ \mbox{on}\ \partial\Omega_j  .
\end{array}
\right.
\end{equation}

We also highlight the papers due to  Alves, de Morais Filho and Souto in \cite{A-M-S}, Alves and Souto in \cite{AS}, where the authors have considered a problem of type (\ref{2}), assuming that $f$ has a critical growth for the case $N \ge 3$ and exponential
critical growth when $N = 2$, respectively.   We emphasize that in the above mentioned papers, the assumption $(V_3)$ was not assumed. 

In all the above mentioned papers, it was essential the method developed in \cite{Del&Felm},  which consists in modifying the nonlinearity to obtain a new problem, whose energy functional  associated satisfies the $(PS)$ condition. After that, making some estimates, it is possible to prove that the solutions obtained for the modified problem are also solutions for the original problem when $\lambda$ is large enough. However, in our opinion, it is not clear that the method developed in \cite{Del&Felm} can be used for our problem, because we are working with biharmonic operator.  To overcome this difficulty, we have developed a new approach to get multi-bump avoiding the penalization on  the nonlinearity. Our inspiration comes from an approach  used in Bartsch \& Wang \cite{B&W1, B&W2}. Here, we modify  the sets where we will apply the Deformation Lemma, see Sections 4 and 5 for more details. 

\vspace{0.5 cm}

Our main result is the following

\begin{theorem}\label{T1}
	Suppose that $(f_1)-(f_4)$ and $(V_1)-(V_3)$ hold. Then, for each non-empty subset
	$\Gamma \subset \{1,\cdots,k\}$and $\varepsilon>0$ fixed, there is a $\lambda^*=\lambda^*(\varepsilon)>0$ such that, (\ref{1}) possesses a solution $u_{\lambda}$, for $\lambda \geq \lambda^*$, satisfying: 
	$$\left| \frac{1}{2}\int_{\mathbb{R}^N}\left[\left|\Delta u_{\lambda}\right|^2+(\lambda V(x)+1)\left|u_{\lambda}\right|^2\right]dx-\int_{\mathbb{R}^N}F(u_{\lambda})dx-c_j\right|< \varepsilon, \forall j \in \Gamma$$
	and
	$$\int_{\mathbb{R}^N\setminus \Omega_{\Gamma}}\left[\left|\Delta u_{\lambda}\right|^2+\left|u_{\lambda}\right|^2\right]dx < \varepsilon,$$
	where $\Omega_{\Gamma}=\cup_{j\in \Gamma}\Omega_j$ and $c_j$ is the
	minimax level of the energy functional related to the problem:
	\begin{equation}\label{4}
	\left\{ \begin{array}{c}
	\Delta^2 u  +u  = f(u), \quad \mbox{in} \quad \Omega_j \\
	u=\frac{\partial u}{\partial \eta}=0,\quad  \mbox{on} \quad \partial\Omega_j  .
	\end{array}
	\right.
	\end{equation}
\end{theorem}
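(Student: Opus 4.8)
The plan is to set up the problem variationally in the Hilbert space
$$E_\lambda = \Big\{u \in H^2(\mathbb{R}^N) : \int_{\mathbb{R}^N} V(x)u^2\,dx < +\infty\Big\}$$
endowed with the norm $\|u\|_\lambda^2 = \int_{\mathbb{R}^N}[|\Delta u|^2 + (\lambda V(x)+1)u^2]\,dx$, and to study the energy functional $I_\lambda(u) = \frac12\|u\|_\lambda^2 - \int_{\mathbb{R}^N} F(u)\,dx$. Hypotheses $(f_1)$--$(f_2)$ guarantee $I_\lambda \in C^1(E_\lambda,\mathbb{R})$ through the subcritical Sobolev embeddings, while $(f_1)$, $(f_3)$ and $(f_4)$ yield the mountain--pass geometry and the Nehari structure (the monotonicity of $t \mapsto f(t)/|t|$ from $(f_4)$ forces the mountain--pass level to coincide with the least energy on the Nehari manifold, and $(f_3)$ bounds the Palais--Smale sequences).

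First I would treat the limit problem. On each component $\Omega_j$ the autonomous functional
$$I_{\infty,j}(u) = \frac12\int_{\Omega_j}[|\Delta u|^2 + u^2]\,dx - \int_{\Omega_j} F(u)\,dx, \qquad u \in H^2_0(\Omega_j),$$
possesses a least--energy (mountain--pass) solution $w_j$ at level $c_j$, corresponding to problem $(4)$. Extending each $w_j$ by zero and summing over $j \in \Gamma$ produces an approximate multi-bump $W_\Gamma = \sum_{j\in\Gamma} w_j$ whose energy approaches $c_\Gamma := \sum_{j\in\Gamma} c_j$; this $c_\Gamma$ is the target level that the sought solution must realize, while on each $\Omega_j$ the localized energy must sit close to $c_j$.

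The core step---following Bartsch and Wang rather than Ding and Tanaka---is to produce a critical point near $W_\Gamma$ \emph{without} penalizing $f$, by instead applying the deformation lemma on a carefully chosen region. Concretely, I would define a set $\Theta_\lambda$ collecting those $u \in E_\lambda$ whose $\Omega_j$--localized energy lies in an $\varepsilon$--neighborhood of $c_j$ for every $j\in\Gamma$ and whose mass on $\mathbb{R}^N\setminus\Omega_\Gamma$, measured by $\int_{\mathbb{R}^N\setminus\Omega_\Gamma}[|\Delta u|^2 + u^2]\,dx$, is small, together with a minimax value $b_\lambda$ over maps into $\Theta_\lambda$ satisfying $c_\Gamma \le b_\lambda \le c_\Gamma + o(1)$. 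The steep--well structure enforced by $(V_1)$--$(V_2)$ and especially $(V_3)$ is what replaces penalization: since $|\{V \le M_0\}| < +\infty$, any Palais--Smale sequence is bounded and carries uniformly small $L^2$ mass on the region where $V$ is large, so for $\lambda$ large one recovers strong convergence and every nonzero weak limit is supported on $\overline\Omega$ and solves the limit problem there. I would then argue by contradiction: if $I_\lambda$ had no critical point in $\Theta_\lambda$ at level $b_\lambda$, a pseudo-gradient deformation engineered to leave $\Theta_\lambda$ invariant would push the minimax configuration strictly below $c_\Gamma$, contradicting the lower bound inherited from the component levels $c_j$. Hence a critical point $u_\lambda \in \Theta_\lambda$ exists for $\lambda \ge \lambda^*(\varepsilon)$, and its membership in $\Theta_\lambda$ delivers precisely the two estimates in the statement.

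\textbf{Main obstacle.} The hard part is exactly why the Ding--Tanaka penalization fails here: the biharmonic operator admits no maximum principle and, for $u \in H^2(A)$, the truncations $u^\pm$ need not lie in $H^2(A)$, so one cannot localize the nonlinearity or truncate solutions as in the second--order case. Consequently all the delicate analysis is transferred to the construction of $\Theta_\lambda$ and of a deformation flow that respects it, so that the concentration on $\Omega_\Gamma$ is enforced \emph{geometrically}, through the sets, rather than \emph{analytically}, through a modified $f$. Controlling the gradient flow near $\partial\Omega_j$ and verifying the invariance of $\Theta_\lambda$ under the deformation will be the most technical points, and it is here that the absence of the usual second--order tools must be compensated by the geometry of the sets.
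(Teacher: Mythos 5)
Your proposal matches the paper's own strategy in all essentials: the steep-well space $E_\lambda$ with the $(PS)$ and $(PS)_\infty$ analysis driven by $(V_3)$, the limit levels $c_j$ and the sum $c_\Gamma$, a minimax $b_{\lambda,\Gamma}$ built from the multi-bump path $\gamma_0(s_1,\dots,s_l)=\sum_j s_jRw_j$ with $b_{\lambda,\Gamma}\to c_\Gamma$, and a Bartsch--Wang style deformation argument on the neighborhood sets (the paper's $A_\mu^\lambda$, your $\Theta_\lambda$) that replaces Del Pino--Felmer penalization, yielding a critical point whose membership in the set gives the two stated estimates. The only sketch-level deviation is that the paper's flow is not literally $\Theta_\lambda$-invariant; instead it cuts off outside $A_{2\mu}^\lambda$ and uses a uniform gradient lower bound $\sigma_0$ on $(A_{2\mu}^\lambda\setminus A_\mu^\lambda)\cap I_\lambda^{c_\Gamma}$ (proved via the $(PS)_\infty$ proposition) to force the deformed path strictly below $c_\Gamma$, contradicting $b_{\lambda,\Gamma}\to c_\Gamma$ --- a refinement of, not a departure from, your plan.
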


%%%%%%%%%%%%%%%%%%%%%%%%%%%%%%%%%%%%%%%%%%%%%%%%%%%%%%%%%%%%%%%%%%%%%%%%%%%%%%%%%Preliminary%%%%%%%%%%%%%%%%%%%%%%%%%%%%%%%%%%%%%%%%%%%%%%%%%%%%%%%%%%%%%%%%%%%%%%%%%%%%%%%%%%%%%%%%%%%%%%%%%%%%%%%%%%%%%%%%%%%%%%%%%%%%%%%%%%%%%%%%%%%%%%%%%%%%%%%%%%%%%%%%%%%%%%%%%%%%%%%%%%%%%%%%%%%%%%%%%%%%%%%%%%%%%%%%%%%%%%%%%%%%%%%%%%%%%%%%%%%%%%%%%%%%%%%%%%%%%%%%%%%%%%%%%%%%%%%%%%%%%%%%%%%%%%%%%%%%%
\section{The $(PS)_c$ Condition}

In this section, we fix some notations and show some properties of the energy functional associated with (\ref{1}), for example, we will show that for each $c\geq 0$, the functional $I_{\lambda}$ satisfies the $(PS)_c$ condition, since that $\lambda$ is suitably chosen. 

To begin with, we recall that the energy functional $I_{\lambda}:E_{\lambda}\rightarrow \mathbb{R}$ associated with the problem  $(\ref{1})$ is given by 
$$
I_{\lambda}(u)=\frac{1}{2}\int_{\mathbb{R}^N}\left[\left|\Delta u\right|^2+(\lambda V(x)+1)\left|u\right|^2\right]dx-\int_{\mathbb{R}^N}F(u)dx,
$$
where
$$
E_{\lambda}=\left\{ u\in H^2(\mathbb{R}^N);\ \int_{\mathbb{R}^N}V(x)\left|u\right|^2 dx <+\infty\right\}.
$$
The subspace $E_{\lambda}$ endowed with the inner product
$$
(u,v)_{\lambda}= \int_{\mathbb{R}^N} \left[\Delta u\Delta v+(\lambda V(x)+1)uv\right]dx,
$$
is a Hilbert space and the norm generated by this inner product will be denoted by $\|\cdot\|_{\lambda}$. 

Hereafter, if $\Theta \subset \mathbb{R}^{N}$ is a mensurable set, we denote by $E_\lambda(\Theta)$ the space $H^{2}(\Theta)$ endowed with the the inner product
$$
(u,v)_{\lambda,\Theta}= \int_{\Theta} \left[\Delta u\Delta v+(\lambda V(x)+1)uv\right]dx.
$$
The norm associated with this inner product will be denoted by $\|\cdot\|_{\lambda,\Theta}$.

Next, we will show some \underline{}technical lemmas, whose proofs follow with the same type of arguments found in \cite{B&W1, B&W2}. However for the readers' convenience we will write their proofs. 

\begin{lemma}\label{l1}
	Let $\{u_n\} \subset E_{\lambda}$ be a $(PS)_c$ sequence  for
	$I_{\lambda},$ then $\{u_n\}$ is bounded in $E_{\lambda}$. Furthermore, $c\geq 0.$
\end{lemma}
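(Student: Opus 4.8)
The plan is to prove boundedness via the standard Ambrosetti–Rabinowitz trick exploiting $(f_3)$, and then obtain $c \geq 0$ by passing to the limit in a suitable combination of the functional values and the derivative applied to the sequence itself.

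\textbf{Boundedness.} Since $\{u_n\}$ is a $(PS)_c$ sequence, we have $I_\lambda(u_n) \to c$ and $I_\lambda'(u_n) \to 0$ in $E_\lambda'$; in particular $I_\lambda(u_n) = c + o_n(1)$ and $|I_\lambda'(u_n)u_n| \leq \|I_\lambda'(u_n)\|_{E_\lambda'}\,\|u_n\|_\lambda = o_n(1)\|u_n\|_\lambda$. I would compute the combination $I_\lambda(u_n) - \frac{1}{\theta}I_\lambda'(u_n)u_n$, using $\theta > 2$ from $(f_3)$. The quadratic part yields
\[
\left(\frac{1}{2}-\frac{1}{\theta}\right)\|u_n\|_\lambda^2,
\]
while the nonlinear part produces $\int_{\mathbb{R}^N}\left[\frac{1}{\theta}f(u_n)u_n - F(u_n)\right]dx$, which is nonnegative by the left-hand inequality $0 < \theta F(t) \leq f(t)t$ in $(f_3)$. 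Hence
\[
\left(\frac{1}{2}-\frac{1}{\theta}\right)\|u_n\|_\lambda^2 \leq c + o_n(1) + \frac{1}{\theta}o_n(1)\|u_n\|_\lambda.
\]
Since $\frac{1}{2}-\frac{1}{\theta}>0$, this forces $\{\|u_n\|_\lambda\}$ to be bounded: if not, dividing by $\|u_n\|_\lambda^2$ along a subsequence tending to infinity gives a contradiction.

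\textbf{The sign of $c$.} Having established boundedness, we know $\|u_n\|_\lambda \leq C$, so $I_\lambda'(u_n)u_n = o_n(1)$. From the inequality above, $\left(\frac{1}{2}-\frac{1}{\theta}\right)\|u_n\|_\lambda^2 = I_\lambda(u_n) - \frac{1}{\theta}I_\lambda'(u_n)u_n = c + o_n(1)$, and since the left-hand side is nonnegative for every $n$, passing to the limit yields $c \geq 0$.

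\textbf{Main obstacle.} The genuinely delicate point is confirming that the nonlinear integral term is well-defined and that the manipulations are legitimate, i.e. that $\int F(u_n)$ and $\int f(u_n)u_n$ are finite along the sequence; this rests on the subcritical growth encoded in $(f_2)$ together with the Sobolev embedding $H^2(\mathbb{R}^N) \hookrightarrow L^q(\mathbb{R}^N)$ for $q \in (2,2_*)$, which holds since $E_\lambda \hookrightarrow H^2(\mathbb{R}^N)$ continuously (because $\lambda V + 1 \geq 1$ by $(V_1)$). One should first record, from $(f_1)$ and $(f_2)$, the growth estimates $|f(t)| \leq \epsilon|t| + C_\epsilon|t|^{q-1}$ and $|F(t)| \leq \frac{\epsilon}{2}|t|^2 + \frac{C_\epsilon}{q}|t|^q$, guaranteeing that $I_\lambda$ and $I_\lambda'$ are well-defined and of class $C^1$. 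Beyond this routine check, the argument is entirely standard; no compactness is needed here, so the proof is purely algebraic once the growth bounds are in hand.
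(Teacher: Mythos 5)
Your proof is correct and takes essentially the same route as the paper: the standard Ambrosetti--Rabinowitz combination $I_\lambda(u_n)-\frac{1}{\theta}I_\lambda'(u_n)u_n$ with $(f_3)$ discarding the nonnegative integral term gives boundedness, and the nonnegativity of $\left(\frac{1}{2}-\frac{1}{\theta}\right)\|u_n\|_\lambda^2$ then forces $c\geq 0$. One cosmetic slip: in your second step the displayed ``$=$'' should be ``$\leq$'', since $I_\lambda(u_n)-\frac{1}{\theta}I_\lambda'(u_n)u_n$ equals the quadratic term \emph{plus} the nonnegative integral $\int_{\mathbb{R}^N}\left[\frac{1}{\theta}f(u_n)u_n-F(u_n)\right]dx$, but the conclusion is unaffected because the inequality $0\leq\left(\frac{1}{2}-\frac{1}{\theta}\right)\|u_n\|_\lambda^2\leq c+o_n(1)$ suffices, exactly as in the paper.
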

\begin{proof}
	Since $\{u_n\}$ is a $(PS)_c$ sequence, we have that 
	$$
	I_{\lambda}(u_n)\rightarrow c\ \mbox{and}\ I'_{\lambda}(u_n) \rightarrow 0.
	$$
	Thereby, for $n$ large enough,
	\begin{equation}\label{5}
	I_{\lambda}(u_n)-\frac{1}{\theta}I'_{\lambda}(u_n)u_n \leq c+1+\left|\left|u_n\right|\right|_{\lambda}.
	\end{equation}
	On the other hand,
	$$
	I_{\lambda}(u_n)-\frac{1}{\theta}I'_{\lambda}(u_n)u_n=\left( \frac{1}{2}-\frac{1}{\theta}\right)\|u_n\|^2_{\lambda}+\int_{\mathbb{R}^N}\left[ \frac{1}{\theta}f(u_n)u_n-F(u_n)\right]\,dx.
	$$
	Then, by $(f_3)$, 
	\begin{equation}\label{6}I_{\lambda}(u_n)-\frac{1}{\theta}I'_{\lambda}(u_n)u_n\geq \left( \frac{1}{2}-\frac{1}{\theta}\right)\|u_n\|^2_{\lambda}.
	\end{equation}
	Gathering $ (\ref{5}) $ and $ (\ref{6}) $, we get 
	$$
	\left( \frac{1}{2}-\frac{1}{\theta}\right)\|u_n\|^2 _{\lambda} \leq c+1+\|u_n\|_{\lambda},
	$$
	showing that $\{u_n\}$ is bounded. Using the boundedness of $\{u_n\}$ and (\ref{5}), we see that
	\begin{equation}\label{7}0\leq\left(
	\frac{1}{2}-\frac{1}{\theta}\right)\|u_n\|^2 _{\lambda} \leq
	c+o_n(1).
	\end{equation}
	Taking the limit $n \rightarrow +\infty$, it follows that $c \geq 	0.$
\end{proof}
\begin{corollary}\label{c1}
	Let $\{u_n\} \subset E_{\lambda}$ be a $(PS)_0$ sequence for
	$I_{\lambda}.$ Then, $u_n\rightarrow 0$ in $E_\lambda$.
\end{corollary}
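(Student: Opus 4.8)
The plan is to exploit the just-proved Lemma \ref{l1}. A $(PS)_0$ sequence is in particular a $(PS)_c$ sequence with $c=0$, so Lemma \ref{l1} applies directly and gives that $\{u_n\}$ is bounded in $E_\lambda$; moreover, inequality $(\ref{7})$ specializes, with $c=0$, to
$$
0\leq\left(\frac{1}{2}-\frac{1}{\theta}\right)\|u_n\|^2_{\lambda}\leq o_n(1).
$$
Since $\theta>2$ by $(f_3)$, the factor $\left(\frac{1}{2}-\frac{1}{\theta}\right)$ is strictly positive, and therefore $\|u_n\|^2_{\lambda}\to 0$ as $n\to+\infty$.

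Hence $u_n\to 0$ in $E_\lambda$, which is exactly the claim. The whole proof is essentially a corollary-level consequence of the estimate already established, so there is no genuine obstacle: the only point one must verify is that inequality $(\ref{7})$ was derived from $(\ref{5})$ and $(\ref{6})$ using only $(f_3)$ and the boundedness of $\{u_n\}$, both of which remain available here, and that the constant in front of $\|u_n\|^2_\lambda$ is positive, which is guaranteed by $\theta>2$. I would simply write out these two lines and conclude.
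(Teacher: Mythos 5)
Your proof is correct and takes essentially the same route as the paper: the paper's proof of Corollary \ref{c1} simply declares it an immediate consequence of the arguments in Lemma \ref{l1}, and your specialization of inequality (\ref{7}) with $c=0$, using $\theta>2$ to get $\|u_n\|_{\lambda}^2\to 0$, is exactly that argument written out. Nothing is missing.
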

\begin{proof}
This corollary is an immediate consequence of the arguments used in the proof of Lemma $\ref{l1}$.
\end{proof}

\begin{lemma}\label{l2}
	Let $\{ u_n\}$ be a $(PS)_c$ sequence for
	$I_{\lambda}$ with $c \geq 0$. If $u_n \rightharpoonup u$ in $E_{\lambda},$ then
	\begin{eqnarray}
	I_{\lambda}(v_n)-I_{\lambda}(u_n)+I_{\lambda}(u) &=& o_n(1) \nonumber\\
	I'_{\lambda}(v_n)-I'_{\lambda}(u_n)+I' _{\lambda}(u) &= & o_n(1)\nonumber,
	\end{eqnarray}
	where $v_n=u_n-u.$ Hence, $\{v_n\}$ is a 
	$(PS)_{c-I_{\lambda}(u)}$ sequence.
\end{lemma}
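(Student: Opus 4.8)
The plan is to prove a Brezis–Lieb type splitting lemma for the functional $I_\lambda$ along a weakly convergent $(PS)_c$ sequence. Write $v_n = u_n - u$, so that $u_n = v_n + u$ with $v_n \rightharpoonup 0$ in $E_\lambda$. The functional splits into a quadratic part and a nonlinear part, $I_\lambda(w) = \frac{1}{2}\|w\|_\lambda^2 - \int_{\mathbb{R}^N} F(w)\,dx$, so I would treat these two pieces separately and show the desired identities hold for each.

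For the quadratic part the argument is purely Hilbert-space algebra: expanding $\|v_n\|_\lambda^2 = \|u_n - u\|_\lambda^2 = \|u_n\|_\lambda^2 - 2(u_n,u)_\lambda + \|u\|_\lambda^2$ and using $u_n \rightharpoonup u$, which gives $(u_n,u)_\lambda \to (u,u)_\lambda = \|u\|_\lambda^2$, one obtains $\tfrac{1}{2}\|v_n\|_\lambda^2 - \tfrac{1}{2}\|u_n\|_\lambda^2 + \tfrac{1}{2}\|u\|_\lambda^2 = o_n(1)$. The analogous statement for the derivatives follows from testing against an arbitrary $\varphi \in E_\lambda$ and the same bilinearity, noting $(v_n,\varphi)_\lambda - (u_n,\varphi)_\lambda + (u,\varphi)_\lambda = 0$ identically. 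So the quadratic terms contribute no error at all; the real content is in the nonlinear terms.

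For the nonlinear part I would prove the two Brezis–Lieb identities
\[
\int_{\mathbb{R}^N} F(v_n)\,dx - \int_{\mathbb{R}^N} F(u_n)\,dx + \int_{\mathbb{R}^N} F(u)\,dx = o_n(1)
\]
and the corresponding one for $f$ tested against $\varphi$. The standard route is to extract a subsequence with $u_n \to u$ almost everywhere (from the weak convergence together with the compact embeddings on bounded domains given by boundedness of $\{u_n\}$, Lemma~\ref{l1}) and to exploit the subcritical growth. Hypotheses $(f_1)$ and $(f_2)$ give the pointwise bounds $|f(t)| \le C(|t| + |t|^{q-1})$ and $|F(t)| \le C(|t|^2 + |t|^q)$ with $q \in (2, 2_*)$, where the relevant embedding here is $H^2(\mathbb{R}^N) \hookrightarrow L^q(\mathbb{R}^N)$ for such $q$. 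I would then invoke the abstract Brezis–Lieb lemma, or reprove it by splitting $\mathbb{R}^N$ into a large ball $B_R$ and its complement: on $B_R$ the compactness and a.e.\ convergence handle the difference via a generalized dominated convergence argument, while on $\mathbb{R}^N \setminus B_R$ the tails of $\int |u|^2 + |u|^q$ are uniformly small by the global $L^2 \cap L^q$ bound.

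\textbf{The main obstacle} will be controlling the nonlinear terms globally on $\mathbb{R}^N$, since the domain is unbounded and only $u_n \rightharpoonup u$ weakly, so no direct compactness of the embedding into $L^q(\mathbb{R}^N)$ is available. The delicate point is making the elementary inequality $|F(a+b) - F(a) - F(b)| \le \varepsilon(|a|^2 + |a|^q) + C_\varepsilon(|b|^2 + |b|^q)$ (and its analogue for $f$) do uniform work over all of $\mathbb{R}^N$; this requires that the uniform $L^2$ and $L^q$ bounds on $\{u_n\}$, supplied by Lemma~\ref{l1}, absorb the $\varepsilon$-term after integration, while the $C_\varepsilon$-term vanishes because $u \in L^2 \cap L^q$ is fixed. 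Once the two $F$-identities and the two $f$-identities are in hand, combining them with the zero-error quadratic parts immediately yields both displayed equations, and the final assertion that $\{v_n\}$ is a $(PS)_{c - I_\lambda(u)}$ sequence follows from $I_\lambda(u_n) \to c$ and $I'_\lambda(u_n) \to 0$ together with the fact that $u$ is a critical point of $I_\lambda$ (so $I'_\lambda(u) = 0$), giving $I_\lambda(v_n) \to c - I_\lambda(u)$ and $I'_\lambda(v_n) \to 0$.
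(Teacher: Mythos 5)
Your proposal is correct and follows essentially the same route as the paper's proof: the quadratic part is handled by Hilbert-space algebra and weak convergence, and the nonlinear term by splitting $\mathbb{R}^N$ into $B_R(0)$ and its complement, with dominated convergence on the ball and the growth estimates from $(f_1)$--$(f_2)$ plus smallness of the tails of $u$ in $L^2\cap L^q$ on the complement, the derivative identity following by the same scheme. The only (harmless) addition is that you make explicit the standard fact $I'_{\lambda}(u)=0$ for the weak limit of a $(PS)_c$ sequence, which the paper leaves implicit when concluding that $\{v_n\}$ is a $(PS)_{c-I_{\lambda}(u)}$ sequence.
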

\begin{proof}
As the first step, note that
	\begin{align*}
	I_{\lambda}(v_n)-I_{\lambda}(u_n)+I_{\lambda}(u) &=\frac{1}{2}\left( \|v_n\|^2_{\lambda}-\|u_n\|^2_{\lambda}+\|u\|^2_{\lambda}\right)&\\
	&\ \ -\int_{\mathbb{R}^N}\left( F(v_n)-F(u_n)+F(u)\right)dx &\\
	&=o_n(1)-\int_{B_R(0)}\left( F(v_n)-F(u_n)+F(u)\right)dx&\\
	&\ \ -\int_{\mathbb{R}^N \setminus B_R(0)}\left( F(v_n)-F(u_n)+F(u)\right)dx ,&
	\end{align*}
	where $ R> $ 0 will be fixed later on. Once $u_n \rightharpoonup u$ in $E_{\lambda}$, we have
	\begin{center}
		\begin{itemize}
			\item	$u_n \rightarrow u\ \mbox{in}\ L^p(B_R(0)) \ \mbox{for}\ 1 \leq p <2_*;$
			\item $u_n(x) \rightarrow u(x)\ a.e. \ \mbox{in}\ \mathbb{R}^N.$
		\end{itemize}
	\end{center}
	Moreover, there are $h_1 \in L^2(B_R(0))$ and $ h_2 \in L^q(B_R(0))$ such that 
	$$
	\left|u_n(x)\right|\leq h_1(x), h_2(x) \quad \mbox{a.e. in } \quad \mathbb{R}^{N}.
	$$
	By Lebesgue's Theorem,
	\begin{equation}\label{8}
	\int_{B_R(0)} \left| F(v_n)-F(u_n)+F(u)\right|\,dx \rightarrow 0.
	\end{equation}
	
	On the other hand, from $(f_1)-(f_2)$, given $\epsilon >0$, there is $C_\epsilon>0$ satisfying
	$$
	\left|F\left(v_n\right)-F\left(u_n\right)\right| \leq \epsilon \left(\left|u_n\right|+\left|u\right|\right)\left|u\right|+C_{\epsilon}\left(\left|u_n\right|+\left|u\right|\right)^{q-1}\left|u\right|.
	$$
The above estimate combined with the boundedness of $\{u_n\}$ and  Sobolev embeddings gives  
	\begin{align*}
		\int_{\mathbb{R}^N \setminus B_R(0)}\left|F \left(v_n\right)- F\left(u_n\right) \right|dx \leq &\,\,\,\, \epsilon C_1 \left( \|u\|_{L^2(\mathbb{R}^N \setminus
			B_R(0))}+\|u\|_{L^2(\mathbb{R}^N \setminus
			B_R(0))}^2\right)&\\
		& C_\epsilon\left(\|u\|_{L^q(\mathbb{R}^N \setminus
			B_R(0))}+\|u\|_{L^q(\mathbb{R}^N
			\setminus B_R(0))}^q \right).\\
		\end{align*}
The above estimate permits to fix $R>0$ large enough verifying 
	 $$ 
	\int_{\mathbb{R}^N
		\setminus B_R(0)}\left|F \left(v_n\right)-
	F\left(u_n\right) \right|dx \leq \epsilon.
	$$ 
	By $(f_2)$, 
	$$
	\int_{\mathbb{R}^N \setminus B_R(0)}\left|
	F\left(u\right) \right|dx \leq \epsilon
	\left|\left|u\right|\right|_{L^2(\mathbb{R}^N \setminus
		B_R(0))}^2+C_{\epsilon}\|u\|_{L^q(\mathbb{R}^N \setminus
		B_R(0))}^q.
	$$ 
	Then, increasing $R$ if necessary, we can assume that
	$$
	\int_{\mathbb{R}^N \setminus B_R(0)}\left| F\left(u\right)
	\right|dx \leq \epsilon.
	$$ 
	Hence,
	$$ 
	\int_{\mathbb{R}^N
		\setminus B_R(0)}\left|F \left(v_n\right)-
	F\left(u_n\right) +F\left(u\right)\right|dx \leq \epsilon, \quad \forall n \in \mathbb{N}.
	$$
	By arbitrariness of $ \epsilon, $ it follows that
	\begin{equation}\label{9}
	\limsup_{n \rightarrow +\infty} \int_{\mathbb{R}^N
		\setminus B_R(0)}\left|F \left(v_n\right)-
	F\left(u_n\right) +F\left(u\right)\right|dx=0.
	\end{equation}
	From (\ref{8}) and (\ref{9}), we get the first limit. The second one follows by exploring the same type of arguments and the growth of $f'$.
\end{proof}

\begin{lemma}\label{l3}
	Let $\{u_n\}$ be a $(PS)_c$ sequence for $I_{\lambda}$. Then $c=0$, or there exists $c_*>0$ independent of $\lambda,$  such that  $c \geq c_*$ for all $\lambda >0.$
\end{lemma}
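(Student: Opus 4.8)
The plan is to establish a dichotomy for the quantity $L:=\limsup_{n}\|u_n\|_\lambda^2$: either $L=0$, which forces $c=0$, or $L$ is bounded below by a positive number independent of $\lambda$, which forces $c\geq c_*$. Everything rests on keeping track of the $\lambda$-independence of the constants, and the source of that uniformity is the inequality
$$
\|u\|^2_{H^2(\mathbb{R}^N)}:=\int_{\mathbb{R}^N}\bigl(|\Delta u|^2+|u|^2\bigr)\,dx\leq \|u\|_\lambda^2,\qquad u\in E_\lambda,
$$
which holds because $V\geq 0$ and $\lambda>0$. Since $\|\cdot\|_{H^2}$ above is an equivalent norm on $H^2(\mathbb{R}^N)$, the Sobolev embedding $H^2(\mathbb{R}^N)\hookrightarrow L^q(\mathbb{R}^N)$ (valid for $q\in(2,2_*)$ by $(f_2)$) gives $\|u\|_{L^q}\leq S_q\|u\|_\lambda$ with $S_q$ depending only on $N$ and $q$, hence independent of $\lambda$. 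This is the step I would be most careful about, since it is what ultimately makes $c_*$ independent of $\lambda$.

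First I would record the basic identity coming from the $(PS)_c$ condition. By Lemma \ref{l1} the sequence is bounded and $c\geq 0$, so $I'_\lambda(u_n)u_n=o_n(1)$, which means $\|u_n\|_\lambda^2=\int_{\mathbb{R}^N}f(u_n)u_n\,dx+o_n(1)$. From $(f_1)$ and $(f_2)$, for every $\epsilon>0$ there is $C_\epsilon>0$ with $|f(t)t|\leq \epsilon t^2+C_\epsilon|t|^q$; combining this with $\int_{\mathbb{R}^N}u_n^2\,dx\leq\|u_n\|_\lambda^2$ and the uniform embedding above yields $\int_{\mathbb{R}^N}f(u_n)u_n\,dx\leq \epsilon\|u_n\|_\lambda^2+C_\epsilon S_q^q\|u_n\|_\lambda^q$. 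Choosing $\epsilon=\tfrac12$ produces the key inequality
$$
\tfrac12\|u_n\|_\lambda^2\leq C\|u_n\|_\lambda^q+o_n(1),\qquad C:=C_{1/2}S_q^q \ \text{ independent of } \lambda.
$$

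Then I would run the dichotomy with $a_n:=\|u_n\|_\lambda^2$. If $L=\limsup_n a_n=0$, then $u_n\to 0$ in $E_\lambda$, and since $\bigl|\int_{\mathbb{R}^N}F(u_n)\bigr|\to 0$ by the same growth bound, continuity gives $I_\lambda(u_n)\to 0$, i.e. $c=0$. If $L>0$, I would pick a subsequence with $a_{n_k}\to L$, divide the key inequality by $a_{n_k}$, and let $k\to\infty$ to get $\tfrac12\leq C\,L^{(q-2)/2}$, hence $L\geq\delta:=(2C)^{-2/(q-2)}>0$, a bound independent of $\lambda$. Finally, using $I_\lambda(u_n)\to c$ together with $(f_3)$ in the form $\int_{\mathbb{R}^N}F(u_n)\leq \tfrac1\theta\int_{\mathbb{R}^N}f(u_n)u_n=\tfrac1\theta a_n+o_n(1)$, one obtains $c+o_n(1)=I_\lambda(u_n)\geq(\tfrac12-\tfrac1\theta)a_n-o_n(1)$; taking $\limsup$ gives
$$
c\geq\Bigl(\tfrac12-\tfrac1\theta\Bigr)L\geq\Bigl(\tfrac12-\tfrac1\theta\Bigr)\delta=:c_*>0,
$$
with $c_*$ independent of $\lambda$.

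The growth estimates and the elementary passage to the limit are routine; the one step that genuinely carries the conclusion is the $\lambda$-independence of $S_q$, and hence of $C$, $\delta$ and $c_*$, which is guaranteed once and for all by $\|u\|_{H^2(\mathbb{R}^N)}\leq\|u\|_\lambda$.
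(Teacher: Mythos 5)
Your proposal is correct and takes essentially the same approach as the paper: both arguments rest on the $\lambda$-uniform Sobolev constant coming from $\|u\|_{H^2(\mathbb{R}^N)}\leq\|u\|_{\lambda}$, the growth estimate $f(t)t\leq \epsilon t^2+C_\epsilon |t|^q$ fed into $I'_{\lambda}(u_n)u_n=o_n(1)$, and the $(f_3)$ inequality linking $c$ to $\limsup_n\|u_n\|_{\lambda}^2$. The only difference is organizational: the paper packages these estimates as a contradiction argument with a small-norm threshold $\delta$ and $c_*=\delta^2\frac{\theta-2}{2\theta}$, whereas you run them directly as a dichotomy on $L=\limsup_n\|u_n\|_{\lambda}^2$.
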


\begin{proof}
	By Lemma $\ref{l1}$,  $c \geq 0$. Supposing $c>0$, we get the inequality 
	\begin{align*}
	c+o_n(1)\left|\left|u_n\right|\right|_{\lambda}& \geq I_{\lambda}(u_n)-\frac{1}{\theta}I'_{\lambda}(u_n)u_n
	\geq \left( \frac{\theta-2}{2\theta}\right)\left|\left|u_n\right|\right|^{2}_{\lambda},&
	\end{align*}
which leads to
	\begin{equation}\label{10}
	\limsup_{n \rightarrow + \infty} \left|\left|u_n\right|\right|_{\lambda}^2 \leq \frac{2c\theta}{\theta-2}.
	\end{equation}
	On the other hand, the growth of $f$ together with the Sobolev embedding gives
	$$
	I'_{\lambda}(u)u \geq \frac{1}{2}\left|\left|u \right|\right|_{\lambda}^2-K\left|\left|u \right|\right|_{\lambda}^q,
	$$
	for some positive constant $K$.  Thus, there exists $\delta>0$ such that
	\begin{equation}\label{11}
	I'_{\lambda}(u)u \geq
	\frac{1}{4}\left|\left|u\right|\right|_{\lambda}^2,\ \mbox{for}\
	\left|\left|u\right|\right|_{\lambda} < \delta.
	\end{equation}
	Setting $c_*= \delta^2\frac{\theta -2}{2\theta}$ and 
	$c<c_*$, $(\ref{10})$ yields  
	$$
	\limsup_{n \rightarrow + \infty} \left|\left|u_n\right|\right|_{\lambda}^2 < \delta^2,
	$$
	implying that for $n $ large enough,
	\begin{equation}\label{12}
	\left|\left|u_n\right|\right|_{\lambda}\leq \delta.
	\end{equation}
	Hence, (\ref{11}) and (\ref{12}) combine to give 
	$$
	I'_{\lambda}(u_n)u_n \geq
	\frac{1}{4}\left|\left|u_n\right|\right|_{\lambda}^2,
	$$ 
	leading to	
	$$
	\left|\left|u_n\right|\right|_{\lambda}^2 \rightarrow 0.
	$$
Thus 
	$$
	I_{\lambda}(u_n) \rightarrow I_{\lambda}(0)=0, 
	$$ 
	which contradicts the hypothesis that  $ \{u_n \} $ is a $ (PS) _c $ sequence with
	$c>0$. Therefore, $c \geq c_*.$
\end{proof}

\begin{lemma}\label{l4}
	Let $\{u_n\}$ be a $(PS)_c$ sequence for $I_{\lambda}.$ Then,
	there exists $\delta_0 >0$ independent
	of $\lambda,$ such that $$\liminf_{n
		\rightarrow + \infty}
	\left|\left|u_n\right|\right|_{L^{q}(\mathbb{R}^N)}^q \geq
	\delta_0c.$$
\end{lemma}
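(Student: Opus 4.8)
The plan is to squeeze $\|u_n\|_\lambda^2$ between two $\lambda$-uniform estimates and chain them: an upper bound in terms of $\|u_n\|_{L^q(\mathbb{R}^N)}^q$ and a lower bound in terms of $c$. The governing concern throughout is that every constant produced be independent of $\lambda$, so that the resulting $\delta_0$ is as well.

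First I would record the standard consequences of the $(PS)_c$ property. By Lemma~\ref{l1} the sequence $\{u_n\}$ is bounded in $E_\lambda$, hence $I'_{\lambda}(u_n)u_n=o_n(1)$, which I rewrite as
\[
\|u_n\|_\lambda^2=\int_{\mathbb{R}^N}f(u_n)u_n\,dx+o_n(1).
\]
For the upper bound I would invoke the same growth estimate already used in Lemma~\ref{l2}: from $(f_1)$--$(f_2)$, given $\epsilon>0$ there is $C_\epsilon>0$, depending only on $f$ and therefore not on $\lambda$, with $f(t)t\leq \epsilon t^2+C_\epsilon|t|^q$ for all $t$ (here $f(t)t>0$ by $(f_3)$). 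The one place where $\lambda$ could enter is neutralized by observing that $(V_1)$ and $\lambda>0$ give $(\lambda V(x)+1)|u|^2\geq |u|^2$ pointwise, so that $\int_{\mathbb{R}^N}|u_n|^2\,dx\leq \|u_n\|_\lambda^2$ uniformly in $\lambda$. Consequently $\int_{\mathbb{R}^N}f(u_n)u_n\,dx\leq \epsilon\|u_n\|_\lambda^2+C_\epsilon\|u_n\|_{L^q(\mathbb{R}^N)}^q$, and taking $\epsilon=\frac{1}{2}$ in combination with the identity above yields
\[
\|u_n\|_\lambda^2\leq 2C_{1/2}\,\|u_n\|_{L^q(\mathbb{R}^N)}^q+o_n(1).
\]

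The lower bound is immediate. Since $(f_3)$ forces $F\geq 0$, the identity $\frac{1}{2}\|u_n\|_\lambda^2=I_\lambda(u_n)+\int_{\mathbb{R}^N}F(u_n)\,dx$ together with $I_\lambda(u_n)\to c$ gives $\|u_n\|_\lambda^2\geq 2c+o_n(1)$. Chaining the two bounds produces $\|u_n\|_{L^q(\mathbb{R}^N)}^q\geq \frac{c}{C_{1/2}}+o_n(1)$, and passing to the $\liminf$ gives the claim with $\delta_0=1/C_{1/2}$. Note that this argument is uniform in $c\geq 0$, so the degenerate case $c=0$ needs no separate treatment.

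I expect the only delicate point to be the $\lambda$-independence of $\delta_0$, rather than any genuine analytic difficulty: one must verify that $\lambda$ enters exclusively through the norm $\|\cdot\|_\lambda$ and is absorbed by the uniform inequality $\int_{\mathbb{R}^N}|u_n|^2\,dx\leq \|u_n\|_\lambda^2$, while the growth constant $C_\epsilon$ is pointwise in $t$ and the $L^q$-norm carries no $\lambda$ at all. Everything else reduces to the two elementary displayed estimates.
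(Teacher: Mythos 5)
Your proof is correct, and it chains the same two basic ingredients in a genuinely different way than the paper does. The paper works with the quantity $I_{\lambda}(u_n)-\tfrac{1}{2}I'_{\lambda}(u_n)u_n=\int_{\mathbb{R}^N}\bigl(\tfrac{1}{2}f(u_n)u_n-F(u_n)\bigr)dx$, bounds the integrand by $\epsilon|t|^2+C_{\epsilon}|t|^{q}$ to obtain (\ref{13}), and then must trade the resulting $\epsilon\|u_n\|_{\lambda}^2$ term for a multiple of $c$; this is done via the $(f_3)$-based inequality (\ref{14}), which gives $\limsup_n\|u_n\|_{\lambda}^2\le \tfrac{2\theta c}{\theta-2}$ and hence $c\le \tfrac{2\epsilon\theta c}{\theta-2}+C_{\epsilon}\liminf_n\|u_n\|_{L^q(\mathbb{R}^N)}^{q}$, forcing the smallness condition $\epsilon<\tfrac{\theta-2}{2\theta}$ and producing $\delta_0=\tfrac{1}{C_\epsilon}\bigl(1-\tfrac{2\epsilon\theta}{\theta-2}\bigr)$. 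You instead sandwich $\|u_n\|_{\lambda}^2$ itself: the upper bound $\|u_n\|_{\lambda}^2\le 2C_{1/2}\|u_n\|_{L^q(\mathbb{R}^N)}^{q}+o_n(1)$ comes from $I'_{\lambda}(u_n)u_n=o_n(1)$ with the $\epsilon\|u_n\|_{\lambda}^2$ term absorbed into the left-hand side, so any fixed $\epsilon<1$ works and no interplay with $\theta$ arises, while the lower bound $\|u_n\|_{\lambda}^2\ge 2c+o_n(1)$ uses only $F\ge 0$, a weak consequence of $(f_3)$, rather than the full $\theta$-superquadraticity. Both routes secure the $\lambda$-independence of $\delta_0$ through the same mechanism you correctly isolate, namely $\|u\|_{L^2(\mathbb{R}^N)}^2\le\|u\|_{\lambda}^2$ (valid by $(V_1)$ and $\lambda>0$) together with the fact that $C_\epsilon$ depends only on $f$. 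What the paper's version buys is direct reuse of estimates (\ref{13}) and (\ref{14}) that serve it elsewhere in Section 2; what yours buys is the elimination of the $\epsilon$-smallness bookkeeping and a cleaner, $\theta$-free constant $\delta_0=1/C_{1/2}$. Your closing remark is also accurate: Lemma \ref{l1} guarantees $c\ge 0$, and at $c=0$ the claimed inequality is vacuous, so no separate case is needed.
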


\begin{proof}
	By $(f_1)$ and $(f_2),$ given $\epsilon >0$, there is $C_\epsilon>0$ such that
	$$
	\frac{1}{2}f(t)t-F(t) \leq \epsilon\left|t\right|^2+C_{\epsilon}\left|t\right|^{q},\ \forall t \in \mathbb{R}.
	$$
	Then,
	\begin{equation}\label{13}
	c \leq \liminf_{n \rightarrow
		+\infty}\left(\epsilon\left|\left|u_n\right|\right|_{\lambda}^2+C_{\epsilon}\left|\left|u_n\right|\right|_{L^q(\mathbb{R}^N)}^{q}\right).
	\end{equation}
	On the other hand,  by $(f_3)$,
	\begin{equation}\label{14}
	I_{\lambda}(u_n)-\frac{1}{\theta}I'_{\lambda}(u_n)u_n\geq
	\left(\frac{1}{2}-\frac{1}{\theta}\right)\left|\left|u_n\right|\right|_{\lambda}^2.
	\end{equation}
	Combining $(\ref{13})$ with $(\ref{14})$, we get  
	$$
	c \leq \frac{2
		\epsilon c \theta}{\theta-2}+C_{\epsilon}\liminf_{n \rightarrow
		+\infty}\left|\left|u_n\right|\right|_{L^q(\mathbb{R}^N)}^{q}.
	$$
	Thereby, for $\epsilon$ small enough,
	$$\liminf_{n \rightarrow
		+\infty}\left|\left|u_n\right|\right|_{L^q(\mathbb{R}^N)}^{q} \geq
	\frac{c}{C_\epsilon}\left(1-\frac{2\epsilon
		\theta}{\theta-2}\right)>0.
		$$
Now, the lemma follows fixing
	$$
	\delta_0=\frac{1}{C_\epsilon}\left(1-\frac{2\epsilon \theta}{\theta-2}\right).
	$$ 
\mbox{\hspace{13 cm}} \end{proof}
\begin{lemma}\label{l5}
	Let $c_1>0$ be a constant independent of $\lambda$. Given $\epsilon >0$,
	there exist $\Lambda=\Lambda(\epsilon)>0$ and $R=R(\epsilon,c_1)>0$ such that,
	if $\{u_n\}$ is a $(PS)_c$ sequence for $I_{\lambda}$ with $c \in [0,c_1],$  then
	$$
	\limsup_{n \rightarrow +\infty}\left|\left|u_n\right|\right|_{L^q(\mathbb{R}^N \setminus B_R(0))}^{q} \leq \epsilon,\quad \forall \lambda \geq \Lambda.
	$$
\end{lemma}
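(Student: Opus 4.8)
The goal is to show that the tail of a $(PS)_c$ sequence in the $L^q$-norm can be made uniformly small, provided $\lambda$ is large and $R$ is large, with the thresholds depending only on $\epsilon$ and the level bound $c_1$. The natural strategy is to control the $L^q$ tail by interpolating between the $L^2$ tail and a higher Lebesgue norm coming from the Sobolev embedding, and then to exploit condition $(V_3)$ together with the size of $\lambda$ to make the $L^2$ mass outside a large ball negligible. The key mechanism is that on the region where $V(x) > M_0$ the potential term $\lambda V(x)\ge \lambda M_0$ dominates, so the $L^2$ integral there is bounded by $\frac{1}{\lambda M_0}\|u_n\|_\lambda^2$, while on the complementary set $\{V\le M_0\}\setminus B_R(0)$ we use that this set has finite measure by $(V_3)$, so its measure outside $B_R(0)$ tends to $0$ as $R\to\infty$.

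The plan is as follows. First I would fix the uniform bound on the norms: by Lemma \ref{l1} together with the estimate \eqref{7}, any $(PS)_c$ sequence with $c\in[0,c_1]$ satisfies $\limsup_n\|u_n\|_\lambda^2\le \frac{2\theta c_1}{\theta-2}=:K_1$, a bound independent of $\lambda$ and of the particular sequence. Next I would split $\mathbb{R}^N\setminus B_R(0)$ into $A_1=\{x\in\mathbb{R}^N\setminus B_R(0): V(x)> M_0\}$ and $A_2=\{x\in\mathbb{R}^N\setminus B_R(0): V(x)\le M_0\}$. On $A_1$ I estimate
\begin{equation}\label{tail-A1}
\int_{A_1}|u_n|^2\,dx \le \frac{1}{\lambda M_0}\int_{A_1}\lambda V(x)|u_n|^2\,dx \le \frac{1}{\lambda M_0}\|u_n\|_\lambda^2,
\end{equation}
so that $\limsup_n\int_{A_1}|u_n|^2\,dx \le \frac{K_1}{\lambda M_0}$, which is small once $\lambda\ge\Lambda(\epsilon)$. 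On $A_2$, since $|A_2|\le |\{V\le M_0\}|<+\infty$ by $(V_3)$, I use Hölder's inequality with the Sobolev embedding $E_\lambda\hookrightarrow L^{2_*}$ (with embedding constant independent of $\lambda$, since $\|u\|_{H^2}\le\|u\|_\lambda$) to obtain $\int_{A_2}|u_n|^2\,dx\le |A_2\cap(\mathbb{R}^N\setminus B_R(0))|^{1-2/2_*}\|u_n\|_{L^{2_*}}^2$, and the measure factor tends to $0$ as $R\to\infty$, giving the smallness for $R\ge R(\epsilon,c_1)$.

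Having controlled the $L^2$ tail, I would recover the $L^q$ tail by interpolation: for $q\in(2,2_*)$ write $q=2(1-\alpha)+2_*\alpha$ for a suitable $\alpha\in(0,1)$, so that
\begin{equation}\label{tail-interp}
\|u_n\|_{L^q(\mathbb{R}^N\setminus B_R(0))}^q \le \|u_n\|_{L^2(\mathbb{R}^N\setminus B_R(0))}^{2(1-\alpha)}\,\|u_n\|_{L^{2_*}(\mathbb{R}^N\setminus B_R(0))}^{2_*\alpha}.
\end{equation}
The second factor is uniformly bounded by the Sobolev embedding and the norm bound $K_1$, while the first factor is made small by the preceding steps; choosing $\Lambda$ and $R$ appropriately then yields the claim. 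I expect the main technical point to be keeping every constant genuinely independent of $\lambda$ — in particular verifying that the Sobolev embedding constant does not degenerate as $\lambda\to\infty$, which holds precisely because $\|\cdot\|_{H^2}\le\|\cdot\|_\lambda$ makes the $\lambda$-dependence work in our favor — and then bookkeeping the two smallness requirements so that $\Lambda$ depends only on $\epsilon$ and $R$ only on $\epsilon$ and $c_1$, as the statement demands.
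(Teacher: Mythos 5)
Your proof follows essentially the same route as the paper's: the same decomposition of $\mathbb{R}^N \setminus B_R(0)$ by the threshold $M_0$, the same use of $\lambda V(x) \geq \lambda M_0$ on one piece and of H\"older's inequality plus the finite-measure condition $(V_3)$ on the other, followed by interpolation to pass from the $L^2$ tail to the $L^q$ tail. The only point to adjust is that for $1 \leq N \leq 4$ one has $2_* = +\infty$, so in your H\"older and interpolation steps the exponent $2_*$ should be replaced by a fixed finite exponent $s \in (q, 2_*)$ — which is precisely the role played by the paper's choice of $p \in \left[1, 2_*/2\right]$.
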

\begin{proof} \, 	For each $R >0$, fix
	$$
	A(R)=\{x \in \mathbb{R}^N/ \left|x\right|> R\ \mbox{and}\ V(x) \geq M_0\}
	$$
	and
	$$
	B(R)=\{x \in \mathbb{R}^N/ \left|x\right|> R\ \mbox{and}\ V(x) < M_0\}.
	$$
	Then,
	\begin{align}\label{15}
	\int_{A(R)}|u_n|^2dx &\leq \frac{1}{(\lambda M_0 +1)}\int_{\mathbb{R}^N}(\lambda V(x) +1)|u_n|^2dx&\nonumber\\
	&\leq \frac{1}{(\lambda M_0 +1)}\left|\left|u_n\right|\right|_{\lambda}^2&\\
	&\leq \frac{1}{(\lambda M_0 +1)}\left[ \left( \frac{1}{2}-\frac{1}{\theta}\right)^{-1}c+o_n(1)\right]\nonumber&\\
	&\leq \frac{1}{(\lambda M_0 +1)}\left[ \left( \frac{1}{2}-\frac{1}{\theta}\right)^{-1}c_1+o_n(1)\right].\nonumber&
	\end{align}
	As $ c_1 $  is independent of $ \lambda, $  by $(\ref{15})$ there is  $\Lambda>0$ such that 
	\begin{equation}\label{16}
	\limsup_{n \to +\infty}\int_{A(R)}|u_n|^2dx < \frac{\epsilon}{2}, \quad \forall \lambda \geq \Lambda .
	\end{equation}
	On the other hand, using the H\"older inequality for $p\in \left[1,2_*/2\right]$ , we obtain
	\begin{align}\label{17}
	\int_{B(R)}|u_n|^2dx &\leq \left(\int_{B(R)}\left|u_n\right|^{2p}dx\right)^{\frac{1}{p}}\left| B(R)\right|^{\frac{1}{p'}}\nonumber . &
	\end{align}
	Now,  using the continuous embedding $E_{\lambda}\hookrightarrow L^{2p}(\Omega),$ it follows that
	\begin{align*}
	\int_{B(R)}|u_n|^2dx&\leq \beta \left|\left|u_n\right|\right|_{\lambda}^2\left| B(R)\right|^{\frac{1}{p'}},&
	\end{align*}
	where $\beta$ is a positive constant. From (\ref{14}), 
	\begin{align*}
	\int_{B(R)}|u_n|^2dx
	&\leq \beta c_1\left( \frac{1}{2}-\frac{1}{\theta}\right)^{-1}\left|
	B(R)\right|^{\frac{1}{p'}}+o_n(1).\nonumber&
	\end{align*}
Now, by $(V_3)$, we know that 
$$
\left|B(R)\right|\rightarrow 0 \quad \mbox{when} \quad R \rightarrow +\infty.
$$ 	
Therefore, we can choose $R$ large enough, such that
	\begin{equation}\label{17}
	\limsup_{n \to +\infty}\int_{B(R)}|u_n|^2dx < \frac{\epsilon}{2}.
	\end{equation}
Gathering (\ref{16}) and (\ref{17}), we find
$$
\limsup_{n \to +\infty}\int_{\mathbb{R}^N \setminus B_R(0)}|u_n|^2dx < \epsilon.
$$
The last inequality combined with interpolation leads to 
$$
\limsup_{n \to +\infty}\int_{\mathbb{R}^N \setminus B_R(0)}\left|u_n\right|^q dx < \epsilon, \lambda \geq \Lambda
$$
increasing $R$ and $\Lambda$ if necessary.  
\end{proof}
\begin{proposition}\label{p1}
	Given $c_1>0,$ there exists $\Lambda=\Lambda(c_1)$ such that $I_{\lambda}$
	verifies the  $(PS)_c$ condition for all $c \in [0,c_1]$ and  $\lambda
	\geq \Lambda.$
\end{proposition}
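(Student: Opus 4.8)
The plan is to take a $(PS)_c$ sequence $\{u_n\}$ with $c \in [0,c_1]$ and $\lambda$ large, and show it admits a strongly convergent subsequence in $E_\lambda$. First I would invoke Lemma \ref{l1} to get that $\{u_n\}$ is bounded in $E_\lambda$; by reflexivity there is $u \in E_\lambda$ with $u_n \rightharpoonup u$ (up to a subsequence). Since $I'_\lambda(u_n) \to 0$ and the functional is $C^1$, passing to the limit in $I'_\lambda(u_n)v \to 0$ against test functions shows $I'_\lambda(u)=0$, so $u$ is a critical point and in particular $I'_\lambda(u)u = 0$. The goal is then to upgrade weak convergence to strong convergence, i.e. $\|u_n - u\|_\lambda \to 0$.

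The key reduction is to set $v_n = u_n - u$ and apply Lemma \ref{l2}, which tells us that $\{v_n\}$ is a $(PS)_{c-I_\lambda(u)}$ sequence. Writing $\tilde c := c - I_\lambda(u)$, Lemma \ref{l1} applied to $\{v_n\}$ gives $\tilde c \ge 0$. Now I would argue that $\tilde c = 0$: suppose $\tilde c > 0$. By Lemma \ref{l3} there is $c_* > 0$ independent of $\lambda$ with $\tilde c \ge c_*$, and since $I_\lambda(u) \ge 0$ (because $u$ is a critical point, so it is itself a $(PS)_{I_\lambda(u)}$ sequence, forcing $I_\lambda(u) \ge 0$ by Lemma \ref{l1}), we have $\tilde c \le c \le c_1$, so $\tilde c \in (0,c_1]$. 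The plan is to derive a contradiction via the localization from Lemma \ref{l5}: choosing $\epsilon$ suitably small and applying Lemma \ref{l5} to $\{v_n\}$ yields $\Lambda=\Lambda(\epsilon)$ and $R$ with
$$
\limsup_{n \to +\infty} \|v_n\|_{L^q(\mathbb{R}^N \setminus B_R(0))}^q \le \epsilon, \quad \forall \lambda \ge \Lambda.
$$
Since $v_n \rightharpoonup 0$, on the ball $B_R(0)$ we have $v_n \to 0$ strongly in $L^q(B_R(0))$ by compact embedding, so $\|v_n\|_{L^q(B_R(0))}^q \to 0$. Adding the two contributions gives $\limsup_n \|v_n\|_{L^q(\mathbb{R}^N)}^q \le \epsilon$ for all $\lambda \ge \Lambda$.

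The contradiction comes from comparing this with Lemma \ref{l4}: since $\{v_n\}$ is a $(PS)_{\tilde c}$ sequence, there is $\delta_0 > 0$ independent of $\lambda$ with $\liminf_n \|v_n\|_{L^q(\mathbb{R}^N)}^q \ge \delta_0 \tilde c \ge \delta_0 c_*$. Fixing $\epsilon < \delta_0 c_*$ at the outset makes the two estimates incompatible, ruling out $\tilde c > 0$. Hence $\tilde c = 0$, and Corollary \ref{c1} applied to the $(PS)_0$ sequence $\{v_n\}$ forces $v_n \to 0$ in $E_\lambda$, i.e. $u_n \to u$ strongly; this establishes the $(PS)_c$ condition for $\lambda \ge \Lambda(c_1)$. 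The main obstacle I anticipate is the bookkeeping on $\Lambda$: Lemma \ref{l5} produces a threshold depending on $\epsilon$ and on the bound $c_1$, so I must verify that a single $\Lambda = \Lambda(c_1)$ works uniformly for every $c \in [0,c_1]$, which is exactly what the $\lambda$-independence of $c_*$ and $\delta_0$ in Lemmas \ref{l3} and \ref{l4} is designed to guarantee.
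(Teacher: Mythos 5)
Your proposal is correct and follows essentially the same route as the paper's proof: boundedness and weak convergence via Lemma \ref{l1}, the splitting $v_n=u_n-u$ with Lemma \ref{l2}, ruling out $d=c-I_{\lambda}(u)>0$ by playing Lemmas \ref{l3} and \ref{l4} against the localization of Lemma \ref{l5} (with $\epsilon$ of the order of $\delta_0 c_*$) together with the compact embedding $E_{\lambda}\hookrightarrow L^{q}(B_R(0))$, and concluding with Corollary \ref{c1}. The only cosmetic differences are your constant-sequence justification of $I_{\lambda}(u)\geq 0$ (the paper computes $I_{\lambda}(u)=I_{\lambda}(u)-\frac{1}{\theta}I'_{\lambda}(u)u\geq 0$ directly from $(f_3)$) and your adding the two $L^q$ contributions instead of subtracting them to contradict compactness on $B_R(0)$, neither of which alters the argument.
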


\begin{proof}
	Let $\{u_n\}$ be a $(PS)_c$ sequence. By Lemma \ref{l1}, $\{u_n\}$ is bounded and consequently, passing to a subsequence if necessary,
	$$\left\{ \begin{array}{c}
	u_n\rightharpoonup u\ \ \mbox{in}\ E_{\lambda};\\
	u_n(x)\rightarrow u(x)  a.e. \ \mbox{in}\ \mathbb{R}^N;\\
	u_n\rightarrow u \ \mbox{in}\ L^{s}_{loc}(\mathbb{R}^N) \quad \mbox{for} \quad  1 \leq s <2_*.
	\end{array}
	\right.$$
	Then $I_{\lambda}'(u)=0$  and $I_{\lambda}(u)\geq 0$, because
$$
	I_{\lambda}(u)=I_{\lambda}(u)-\frac{1}{\theta}I'_{\lambda}(u)u\geq \left( \frac{1}{2}-\frac{1}{\theta}\right)\|u\|^{2}_{\lambda} \geq 0.
	$$
	Taking $v_n=u_n-u,$ we have by Lemma \ref{l2}
	that $\{v_n\}$ is a $(PS)_{d}$ sequence, with $d=c-I_{\lambda}(u)$. Furthermore, 
	$$
	0\leq d=c-I_{\lambda}(u)\leq c \leq c_1.
	$$
	We claim that $d=0$. Indeed, otherwise $d>0$. Thereby, by Lemmas \ref{l3} and \ref{l4}, $d \geq c_*$
	and 
	\begin{equation}\label{18}
	\liminf_{n \rightarrow +\infty} \| v_n\|_{L^q(\mathbb{R}^N)}^q \geq
	\delta_0c_*>0.
	\end{equation}
	Applying the Lemma \ref{l5} with $\epsilon=\frac{\delta_0c_*}{2}>0,$
	there exist $\Lambda, R>0$ such that 
	\begin{equation}\label{19}
	\limsup_{n \rightarrow +\infty} \| v_n\|_{L^q(\mathbb{R}^N)\setminus
		B_R(0)}^q \leq \frac{\delta_0c_*}{2}, \quad \mbox{for} \quad \lambda \geq \Lambda.
	\end{equation}
	Combining $(\ref{18})$ with $(\ref{19})$, we obtain
	$$\liminf_{n \rightarrow
		+\infty} \left|\left| v_n\right|\right|_{L^q(B_R(0))}^q \geq
	\frac{\delta_0c_*}{2}>0,
	$$ 
	which is an absurd, because as $v_n \rightharpoonup 0$ in $E_\lambda$, and the compact embedding   
	\,\,\,\,\,\,\,\,\,\,\,\,\,\,\,\,\,\,\,\,\,\,\,\,\,$E_{\lambda}\hookrightarrow L^q(B_R(0))$ ensures that
	$$
	\liminf_{n
		\rightarrow +\infty} \| v_n\|_{L^q(B_R(0))}^q=0.
	$$ 
	Therefore $d=0$  and $\{v_n\}$ is a $(PS)_0$ sequence. Then, by Corollary
	\ref{c1}, $v_n \rightarrow 0$ in $E_\lambda$, or equivalently, $u_n \rightarrow u$ in $E_\lambda$, showing that for $\lambda$ large enough, $I_{\lambda}$
	satisfies the $(PS)_c$ condition for all $c \in [0,c_1].$
\end{proof}

\section{The $(PS)_{\infty}$ Condition}
In this section, we will study  the behavior of a  $(PS)_{\infty}$ sequence, that is, a sequence $\{u_n\} \subset H^2(\mathbb{R}^N)$ satisfying:
\begin{align*}
&u_n \in E_{\lambda_n}\ \mbox{and}\ \lambda_{n} \rightarrow +\infty;&\\
&I_{\lambda_n}(u_n)\rightarrow c,\quad \mbox{for some} \quad c\in [0,c_{\Gamma}];&\\
&\|I_{\lambda_n}'(u_n)\|_{E'_{\lambda_n}} \rightarrow 0,&
\end{align*}
where $c_{\Gamma}$ is a positive constant, which will be defined in the next section and it is independent of $\lambda$.
\begin{proposition}\label{p2}
	Let $\{u_n\}$ be a $(PS)_{\infty}$ sequence for $I_{\lambda}$.
	Then, there is a subsequence of $\{u_n\}$ , still denoted by itself,
	and $u \in H^2(\mathbb{R}^N)$ such that $$u_n \rightharpoonup u\ \mbox{in}\  H^2(\mathbb{R}^N).$$
	Moreover,
	\begin{description}
		\item[i)]$u\equiv 0$ in $\mathbb{R}^N \setminus \Omega_{\Gamma}$ and $u$ is a solution of
		\begin{equation}\label{20}
		\left\{ \begin{array}{c}
		\Delta^2 u  +u  =  f(u),\mbox{in}\ \Omega_j, \ \\
		u=\dfrac{\partial u}{\partial \eta} =0,\ \mbox{on}\  \partial\Omega_j,
		\end{array}
		\right.
		\end{equation}
		for all $j \in \Gamma;$
		\item[ii)]$\left|\left| u_n-u\right|\right|^{2}_{\lambda_{n}} \rightarrow 0.$ 
		\item[iii)]$\left\lbrace u_n\right\rbrace $ also satisfies
		\begin{align*}
		&\lambda_n \int_{\mathbb{R}^N}V(x)\left|u_n\right|^2dx \rightarrow 0,\ n \rightarrow +\infty&\\
		&\left|\left|u_n\right|\right|^2_{\lambda_n,\mathbb{R}^N\setminus \Omega_{\Gamma}}\rightarrow 0 &\\
		&\left|\left|u_n\right|\right|^2_{\lambda_n,\Omega'_j}\rightarrow \int_{\Omega_j}\left[ \left|\Delta u\right|^2+\left|u\right|^2\right]dx,\ \forall j\in \Gamma.&
		\end{align*}
	\end{description}
\end{proposition}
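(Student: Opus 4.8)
The plan is to run the standard concentration analysis for $(PS)_\infty$ sequences, adapted to the biharmonic setting, in four stages: a uniform bound together with the weak limit, identification of $u$, the strong convergence, and the energy splitting. First I would produce a bound on $\|u_n\|_{\lambda_n}$ that is uniform in $n$. Arguing as in Lemma \ref{l1}, for $n$ large one has $\left(\frac{1}{2}-\frac{1}{\theta}\right)\|u_n\|^2_{\lambda_n} \le I_{\lambda_n}(u_n)-\frac{1}{\theta}I'_{\lambda_n}(u_n)u_n \le c_\Gamma+1+\|u_n\|_{\lambda_n}$, so that $\limsup_n \|u_n\|^2_{\lambda_n} \le \left(\frac{1}{2}-\frac{1}{\theta}\right)^{-1} c_\Gamma =: C$. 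Since $\|u_n\|^2_{\lambda_n} \ge \int_{\mathbb{R}^N}(|\Delta u_n|^2+|u_n|^2)\,dx$, which is an equivalent norm on $H^2(\mathbb{R}^N)$, the sequence is bounded in $H^2(\mathbb{R}^N)$; passing to a subsequence I obtain $u_n \rightharpoonup u$ in $H^2(\mathbb{R}^N)$, $u_n \to u$ in $L^s_{loc}$ for $1\le s<2_*$, and $u_n\to u$ a.e.

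Next I would identify $u$. From $\lambda_n\int_{\mathbb{R}^N} V|u_n|^2 \le \|u_n\|^2_{\lambda_n}\le C$ one gets $\int_{\mathbb{R}^N} V|u_n|^2 \le C/\lambda_n\to 0$, so Fatou's lemma forces $\int_{\mathbb{R}^N} V|u|^2=0$, i.e. $u=0$ a.e. on $\{V>0\}$; by $(V_2)$ this gives $u=0$ a.e. on $\mathbb{R}^N\setminus\overline{\Omega}$. Because $u\in H^2(\mathbb{R}^N)$ vanishes a.e. off $\Omega$ and $\partial\Omega_j$ is smooth, the traces of $u$ and of $\partial u/\partial\eta$ vanish on $\partial\Omega_j$, so $u|_{\Omega_j}\in H^2_0(\Omega_j)$, which is exactly the boundary information in (\ref{20}). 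Testing $I'_{\lambda_n}(u_n)\varphi=o_n(1)$ against $\varphi\in C_0^\infty(\Omega_j)$, where the term $\lambda_n V$ disappears since $V\equiv 0$ on $\Omega_j$, and passing to the limit through the local convergences yields $\int_{\Omega_j}(\Delta u\,\Delta\varphi+u\varphi)=\int_{\Omega_j} f(u)\varphi$, so $u$ solves (\ref{20}) on each $\Omega_j$.

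For the strong convergence (ii) I would use that $u$ is supported in $\overline{\Omega}$, where $V=0$, hence $u\in E_{\lambda_n}$ with $\|u\|^2_{\lambda_n}=\int(|\Delta u|^2+|u|^2)$ independent of $n$, and $(u,u_n-u)_{\lambda_n}=\int[\Delta u\,\Delta(u_n-u)+u(u_n-u)]\to 0$ by weak convergence in $H^2$. Combined with $I'_{\lambda_n}(u_n)(u_n-u)=o_n(1)$ this yields $\|u_n-u\|^2_{\lambda_n}=\int_{\mathbb{R}^N} f(u_n)(u_n-u)\,dx+o_n(1)$. The hard part is forcing this integral to vanish. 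I would split it over $B_R(0)$ and its complement: on $B_R(0)$ the local strong convergence in $L^2\cap L^q$ together with the growth bound $|f(t)|\le \epsilon|t|+C_\epsilon|t|^{q-1}$ kills the contribution; on the complement I would reproduce the tail estimate of Lemma \ref{l5} — which is in fact easier here, since $\int_{A(R)}|u_n|^2\le C/(\lambda_n M_0+1)\to 0$ and $|B(R)|\to 0$ by $(V_3)$ — to bound $\|u_n\|_{L^q(\mathbb{R}^N\setminus B_R)}$ uniformly by a small quantity, whence $\int f(u_n)(u_n-u)\to 0$ by H\"older. Thus $\|u_n-u\|^2_{\lambda_n}\to 0$.

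Finally, (iii) follows from (ii): since $Vu=0$ one has $\lambda_n\int V|u_n|^2=\lambda_n\int V|u_n-u|^2\le\|u_n-u\|^2_{\lambda_n}\to 0$, and for any measurable $\Theta$, $|\,\|u_n\|_{\lambda_n,\Theta}-\|u\|_{\lambda_n,\Theta}|\le\|u_n-u\|_{\lambda_n}\to 0$ with $\|u\|^2_{\lambda_n,\Theta}=\int_\Theta(|\Delta u|^2+|u|^2)$, giving the stated limit on each $\Omega'_j$ (using $u=0$ outside $\Omega_j$ there). The one remaining point, which closes both (i) and the $\mathbb{R}^N\setminus\Omega_\Gamma$ assertion, is that $u\equiv 0$ on the components $\Omega_j$ with $j\notin\Gamma$. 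For this I would pass to the limit in the energy to obtain $c=\sum_{j=1}^k J_j(u|_{\Omega_j})$, where $J_j$ is the limit functional on $\Omega_j$ and each $u|_{\Omega_j}$ is a critical point of $J_j$; by $(f_4)$ every nontrivial critical point satisfies $J_j(u|_{\Omega_j})\ge c_j$, so the constraint $c\le c_\Gamma$, with $c_\Gamma$ as defined in the next section, rules out nonzero contributions from indices outside $\Gamma$. I expect this energy-level comparison, rather than the tail estimate, to be the genuinely delicate step.
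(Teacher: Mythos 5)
Your proposal follows the paper's proof almost step for step through the bulk of the statement: the boundedness argument via $(f_3)$ is the same; your Fatou identification of $u=0$ a.e.\ on $\{V>0\}$ is the same device as the paper's exhaustion by the sets $C_m=\{x:\,V(x)>1/m\}$; the trace argument giving $u\in H_0^2(\Omega)$ and the testing against $\varphi\in C_0^\infty(\Omega_j)$ are identical. For part $ii)$, your identity $\|u_n-u\|^2_{\lambda_n}=\int_{\mathbb{R}^N} f(u_n)(u_n-u)\,dx+o_n(1)$, obtained by pairing $I'_{\lambda_n}(u_n)$ with $u_n-u$, is a harmless repackaging of the paper's split into $I'_{\lambda_n}(u_n)u_n$ and $I'_{\lambda_n}(u_n)u$ (which yields $\|u_n-u\|_{\lambda_n}^2=\int f(u_n)u_n-\int f(u)u+o_n(1)$), and both versions are closed by exactly the tail estimate of Lemma \ref{l5} combined with local compactness; your derivation of $iii)$ from $ii)$ coincides with the paper's. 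Up to this point the proposal is correct and essentially the paper's argument.

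The genuine gap is your final step, and your own instinct that it is "the delicate point" is right: the energy-comparison argument you sketch does not work. From $c=\sum_{j=1}^{k} I_j(u|_{\Omega_j})$, the fact that each restriction is a critical point of $I_j$, and the least-energy characterization $I_j(v)\geq c_j$ for nontrivial critical points (via $(f_4)$), the constraint $c\leq c_\Gamma$ only bounds the \emph{total} energy. Nothing in the definition of a $(PS)_\infty$ sequence forces the $\Gamma$-components of $u$ to be nontrivial, so the configuration in which $u$ vanishes on every $\Omega_j$ with $j\in\Gamma$ and is nontrivial on a single $\Omega_{j_0}$ with $j_0\notin\Gamma$ costs only $c_{j_0}$, which may perfectly well satisfy $c_{j_0}\leq c_\Gamma=\sum_{j\in\Gamma}c_j$; your conclusion "rules out nonzero contributions from indices outside $\Gamma$" is therefore a non sequitur. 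It is worth noting that the paper's own proof of Proposition \ref{p2} never establishes this part either: it proves only $u=0$ a.e.\ in $\mathbb{R}^N\setminus\overline{\Omega}$, membership in $H_0^2(\Omega)$, and the limit equation on the components. The vanishing of $u$ on $\Omega_j$ for $j\notin\Gamma$ is recovered only in the completion of the proof of Theorem \ref{T1}, for the particular sequences produced by Proposition \ref{p5}: those carry the additional localization $\|u_{\lambda_n}\|_{\lambda_n,\mathbb{R}^N\setminus\Omega'_\Gamma}\rightarrow 0$ coming from membership in $A_\mu^\lambda$, and one then invokes a uniform lower bound $\sigma_1>0$ on the $H_0^2(\Omega_j)$-norm of any nontrivial solution of the limit problem. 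So to repair your argument you must import that extra information (i.e., restrict to sequences constrained to $A_{2\mu}^{\lambda_n}$, or add the localization as a hypothesis); the minimax-level bookkeeping alone cannot deliver $u\equiv 0$ in $\mathbb{R}^N\setminus\Omega_\Gamma$.
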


\begin{proof}
	In what follows, we fix $c \in [0,c_\Gamma]$ verifying
	$$
	I_{\lambda_n}(u_n)\rightarrow c\ \mbox{and}\ \|I'_{\lambda_n}(u_n)\|_{E'_{\lambda_n}} \rightarrow 0.
	$$
	Then, there exists $n_0 \in \mathbb{N}$ such that, 
	$$
	I_{\lambda_n}(u_n)-\frac{1}{\theta}I'_{\lambda_n}(u_n)u_n \leq c+1+\left|\left|u_n\right|\right|_{\lambda_n}, \quad \forall n \ge n_0.
	$$
	On the other hand, from $f_3)$, 
	$$
	I_{\lambda_n}(u_n)-\frac{1}{\theta}I'_{\lambda_n}(u_n)u_n \geq \left(\frac{1}{2}-\frac{1}{\theta}\right)\left|\left|u_n\right|\right|^2_{\lambda_n},\ \forall n \in \mathbb{N}.
	$$
	So, for $n\ge n_0$,
	$$
	\left(\frac{1}{2}-\frac{1}{\theta}\right)\left|\left|u_n\right|\right|^2_{\lambda_n} \leq c+1+\left|\left|u_n\right|\right|_{\lambda_n},
	$$
	implying that $\{\left|\left|u_n\right|\right|_{\lambda_n}\}$ is bounded in $\mathbb{R}$. As
	$$
	\left|\left|u_n\right|\right|_{\lambda_n} \geq \left|\left|u_n\right|\right|_{H^2(\mathbb{R}^N)}, \ \forall n \in \mathbb{N},
	$$
	$\{u_n\}$ is also bounded in $H^2(\mathbb{R}^N)$, and so, there exists a subsequence of $\{u_n\}$, still denoted by itself, and $u \in H^{2}(\mathbb{R}^{N})$ such that
	$$
	u_n \rightharpoonup u\ \mbox{in}\ H^2(\mathbb{R}^N).
	$$
	To show $(i)$, we fix for each $m \in \mathbb{N}^*$
	the set
	$$
	C_m=\left\{x\in \mathbb{R}^N/ V(x) > \frac{1}{m}\right\}.
	$$
	Hence
	$$\mathbb{R}^N\setminus \overline{\Omega}=\bigcup_{m=1}^{+\infty}C_m.$$
	Note that,
	\begin{align*}
	\int_{C_m}\left|u_n\right|^2dx&=\int_{C_m}\frac{\lambda_n V(x)+1}{\lambda_n V(x)+1}\left|u_n\right|^2dx&\\
	&\leq \frac{1}{\frac{\lambda_n}{m}+1}\left|\left|u_n\right|\right|_{\lambda_n}^2&\\
	&\leq \frac{mM}{{\lambda_n}+m},&
	\end{align*}
	where $M=\sup_{n \in \mathbb{N}}\|u_n\|_{\lambda_n}^2.$ By Fatou's Lemma
	\begin{align*}
	\int_{C_m}\left|u\right|^2dx&\leq \liminf_{n \rightarrow +\infty}\int_{C_m}\left|u_n\right|^2dx&\\
	&\leq \liminf_{n \rightarrow +\infty}\frac{mM}{{\lambda_n}+m}=0.&
	\end{align*}
	Therefore, $u=0$ almost everywhere in $C_m$, and consequently, $u=0$ almost everywhere in $\mathbb{R}^N\setminus \overline{\Omega}.$
	Besides, fixing $\varphi \in C_{0}^{\infty}(\mathbb{R}^N\setminus \overline{\Omega})$, we have
	$$
	\int_{\mathbb{R}^N\setminus \overline{\Omega}}\nabla u(x)\varphi(x)dx=-\int_{\mathbb{R}^N\setminus \overline{\Omega}} u(x)\nabla\varphi(x)dx=0,
	$$
	from where it follows that
	$$
	\nabla u(x)=0,\ a.e. \ \mbox{in}\ \mathbb{R}^N\setminus \overline{\Omega}.
	$$
	Since $\partial \Omega$ is smooth ,  $u \in H^2(\mathbb{R}^N\setminus \overline{\Omega})$ and $\nabla u \in H^1(\mathbb{R}^N\setminus \overline{\Omega}),$  by Trace Theorem , there are constants $K_1,K_2>0$ satisfying 
	$$
	\left|\left|u\right|\right|_{L^2(\partial \Omega)} \leq K_1\left|\left|u\right|\right|_{H^2(\mathbb{R}^N\setminus \overline{\Omega})}=0,
	$$
	and
	$$
	\left|\left|\nabla u\right|\right|_{L^2(\partial \Omega)} \leq K_2 \left|\left|\nabla u\right|\right|_{H^1(\mathbb{R}^N\setminus \overline{\Omega})}=0,
	$$ 
showing that $u \in H_0^2(\Omega).$ To complete the proof of $i)$, consider a test function $\varphi \in C_{0}^{\infty}(\Omega)$ and note that
	\begin{equation}\label{21}
	I'_{\lambda_n}(u_n)\varphi=\int_{\Omega}\left[\Delta u_n \Delta \phi + u_n \varphi\right]dx-\int_{\Omega}f(u_n)\varphi dx.
	\end{equation} 
Since $\left\lbrace u_n\right\rbrace $ is a $(PS)_{\infty}$ sequence, we derive that
	\begin{equation}\label{22}
	I'_{\lambda_n}(u_n)\varphi \rightarrow 0.
	\end{equation}
Recalling that $u_n \rightharpoonup u$ in $H^2(\mathbb{R}^{N})$, we must have 
	\begin{equation}\label{23}
	\int_{\Omega}\left[\Delta u_n \Delta \varphi + u_n \varphi\right]dx \rightarrow \int_{\Omega}\left[\Delta u \Delta \varphi + u \varphi\right]dx
	\end{equation}
	and
	\begin{equation}\label{24}
	\int_{\Omega}f(u_n)\varphi dx \rightarrow \int_{\Omega}f(u)\varphi dx.
	\end{equation}
	Therefore, from (\ref{21})-(\ref{24}), 
	$$
	\int_{\Omega}\left[\Delta u \Delta \varphi +u \phi\right]dx=\int_{\Omega}f(u)\varphi dx,\ \forall \varphi \in C_0^{\infty}(\Omega).
	$$
	As $C_{0}^{\infty}(\Omega)$ is dense in $H_0^2(\Omega)$, the above equality gives
	$$
	\int_{\Omega}\left[\Delta u \Delta v +uv\right]dx=\int_{\Omega}f(u)v dx,\ \forall v \in H_0^2(\Omega),
	$$
	showing that $u$ is a weak solution of the problem
	\begin{equation}\label{25}
	\left\{ \begin{array}{c}
	\Delta^2 u  +u  =  f(u),\, \mbox{in} \, \Omega_j, \ \\
	u=\dfrac{\partial u}{\partial \eta} =0,\ \mbox{on}\  \partial\Omega_j.
	\end{array}
	\right.
	\end{equation}
	For $ii)$, note that
	\begin{align} \label{26}
	\left|\left|u_n-u\right|\right|_{\lambda_n}^2&= \left|\left|u_n\right|\right|_{\lambda_n}^2+\left|\left|u\right|\right|_{\lambda_n}^2-2\int_{\mathbb{R}^N}\left[\Delta u_n\Delta u+ (\lambda_n V(x)+1)u_nu\right]dx.& 
	\end{align}
From $i)$, 
$$
	\|u\|_{\lambda_n}^2=\|u\|^2_{H_0^2(\Omega)},
$$
and so, 
	$$
	\int_{\mathbb{R}^N}\left[\Delta u_n\Delta u+ (\lambda_n V(x)+1)u_nu\right]dx=\|u\|_{H_0^2(\Omega)}^2+o_n(1).
	$$
	Thus, we can rewrite (\ref{26}) as
	\begin{align} \label{27}
	\left|\left|u_n-u\right|\right|_{\lambda_n}^2&= \left|\left|u_n\right|\right|_{\lambda_n}^2-\left|\left|u\right|\right|_{H_0^2(\Omega)}^2+o_n(1).& 
	\end{align}
Gathering the boundedness of $\{\|u_n\|_{\lambda_n}\}$ with the limit $ \|I'_{\lambda_n}(u_n)\|_{E'_{\lambda_n}}\rightarrow 0,$ we find the limit 
	$$
	I'_{\lambda_n}(u_n)u_n \rightarrow 0.
	$$
	Hence,
	\begin{equation} \label{28}
	\left|\left|u_n\right|\right|_{\lambda_n}^2= I'_{\lambda_n}(u_n)u_n+\int_{\mathbb{R}^N}f(u_n)u_ndx =\int_{\mathbb{R}^N}f(u_n)u_ndx+o_n(1).
	\end{equation}
	On the other hand,  we know that the limit  $	I'_{\lambda_n}(u_n)u \to 0 $ is equivalent to 
  $$
	\int_{\Omega}\left[ \Delta u_n \Delta u + u_nu\right]dx-\int_{\Omega}f(u_n)udx=o_n(1),
	$$
	which leads to 
	\begin{equation} \label{29}
	\int_{\mathbb{R}^N}\left[ \left|\Delta u\right|^2 +
	\left|u\right|^2\right]dx=\int_{\mathbb{R}^N}f(u)udx.
	\end{equation}
	Combining (\ref{27}) with (\ref{28}) and (\ref{29}), we see that 
	$$
	\left|\left|u_n-u\right|\right|_{\lambda_n}^2=\int_{\mathbb{R}^N}f(u_n)u_ndx-\int_{\mathbb{R}^N}f(u)udx+o_n(1).
	$$
	The same arguments used in the proof of Lemma \ref{l5} gives   
	$$
	\int_{\mathbb{R}^N}f(u_n)u_ndx\rightarrow \int_{\mathbb{R}^N}f(u)udx,
	$$
	finishing the proof of $ii)$. 
	Finally, to prove $iii),$ it is enough to use the inequality below 
	$$
	\lambda_n\int_{\mathbb{R}^N}V(x)\left|u_n\right|^2dx=\lambda_n\int_{\mathbb{R}^N}V(x)\left|u_n-u\right|^2dx
	\leq \|u_n-u\|_{\lambda_n}^2 \rightarrow 0.
	$$ \hspace{13.5cm}
\end{proof}

\section{A special minimax level}
In this section, we denote by $I_j:H_0^2(\Omega_j) \rightarrow
\mathbb{R} $ and $I_{\lambda,j}:H^2(\Omega'_j)  \rightarrow \mathbb{R}
$ the functionals given by
$$
I_j(u)=\frac{1}{2}\int_{\Omega_j}\left[\left|\Delta u\right|^2+\left| u\right|^2\right]dx-\int_{\Omega_j}F(u)dx
$$
and
$$
I_{\lambda,j}(u)=\frac{1}{2}\int_{\Omega'_j}\left[\left|\Delta u\right|^2+(\lambda V(x)+1)\left| u\right|^2\right]dx-\int_{\Omega'_j}F(u)dx.
$$

It is easy to show that $I_j$ and  $I_{\lambda,j}$ satisfy the mountain pass geometry. Hereafter, we denote by $c_j$ and
$c_{\lambda,j}$ the mountain pass levels related to the functionals  $I_j$ and $I_{\lambda,j}$ respectively.

Since $I_j$ and $I_{\lambda,j}$ satisfy  the
Palais-Smale condition, from Mountain Pass Theorem due to
Ambrosetti-Rabinowitz, there exist $w_j \in H_0^2(\Omega_j)$ and  $v_j
\in H^2(\Omega'_j)$ satisfying 
$$
I_j(w_j)=c_j,\ I_{\lambda,j}(v_j)=c_{\lambda,j}\ \mbox{and}\ I'_j(w_j)=I'_{\lambda,j}(v_j)=0.
$$

In what follows, $c_{\Gamma}=\sum_{j=1}^{l}c_j$ and $R>0$ is a constant large enough verifying  
$$0
< I_j(\frac{1}{R}w_j), I_j(Rw_j)< c_j, \forall j \in \Gamma.
$$
Hence, by definition of $c_j$, 
$$
\max_{s \in
	[1/R^2,1]}I_j(sRw_j)=c_j,\ \forall j \in \Gamma.
$$ 
Consider $\Gamma=\{1,2,\cdots,l\}$, with $l \leq k$ and fix 
$$
\gamma_0(s_1,s_2,\cdots,s_l)(x)=\sum_{j=1}^{l}s_jRw_j(x),\ \forall (s_1,\cdots,s_l)\in [1/R^2,1]^l.
$$
From now on, we denote by $\Gamma_*$ the class of continuous path $\gamma \in C([1/R^2,1],E_{\lambda}\setminus \{0\})$ satisfying the following conditions: 
$$
\gamma=\gamma_0\ \mbox{on}\ \partial([1/R^2,1]^l) \leqno{(a)}
$$
and 
$$
I_{\lambda,\mathbb{R}^N\setminus \Omega'_{\Gamma}}(\gamma(s_1,\cdots,s_l))\ge 0, \leqno{(b)}
$$
where $I_{\lambda,\mathbb{R}^N\setminus \Omega'_{\Gamma}}: H^{2}(\mathbb{R}^N\setminus \Omega'_{\Gamma}) \to \mathbb{R}$ is the functional defined  by
$$
I_{\lambda,\mathbb{R}^N\setminus \Omega'_{\Gamma}}(u)=\frac{1}{2}\int_{\mathbb{R}^N\setminus \Omega'_{\Gamma}}\left[\left|\Delta u\right|^2+(\lambda V(x)+1)\left| u\right|^2\right]dx-\int_{\mathbb{R}^N\setminus \Omega'_{\Gamma}}F(u)dx.
$$

Using the class $\Gamma_*$, we define the following minimax level
$$
b_{\lambda,\Gamma}=\inf_{\gamma \in \Gamma_*}\max_{(s_1,\cdots,s_l) \in [1/R^2,1]^l}I_{\lambda}(\gamma(s_1,\cdots,s_l)).
$$

Notice that $\Gamma_* \neq \emptyset$, because $\gamma_0 \in \Gamma_*$. 
\begin{lemma}\label{l6}
	For each $\gamma \in \Gamma_*,$ there is
	$(t_1,\cdots,t_l)\in[1/R^2,1]^l$ verifying 
	$$
	I'_{\lambda,j}(\gamma(t_1,\cdots,t_l))\gamma(t_1,\cdots,t_l)=0,\ \mbox{for}\ j\in\{1,\cdots,l\}.
	$$
\end{lemma}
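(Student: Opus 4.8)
The plan is to reduce the claim to a zero-existence statement for a continuous vector field on the cube $[1/R^2,1]^l$ and then invoke the Poincar\'e--Miranda theorem. Concretely, I would define $G=(G_1,\dots,G_l):[1/R^2,1]^l\to\mathbb{R}^l$ by
$$
G_j(s_1,\dots,s_l)=I'_{\lambda,j}\big(\gamma(s_1,\dots,s_l)\big)\gamma(s_1,\dots,s_l),
$$
where the argument is understood as the restriction of $\gamma(s_1,\dots,s_l)$ to $\Omega'_j$. Since $\gamma\in C([1/R^2,1]^l,E_{\lambda}\setminus\{0\})$ and each $I_{\lambda,j}$ is of class $C^1$, the map $G$ is continuous, and a zero of $G$ is exactly the point $(t_1,\dots,t_l)$ we seek.

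The decisive step is to control the sign of each $G_j$ on the two faces of the cube where $s_j$ is extremal. On the whole boundary $\partial([1/R^2,1]^l)$ condition $(a)$ gives $\gamma=\gamma_0$, and since $w_i\in H_0^2(\Omega_i)$ is supported in $\Omega_i$ while $\mathrm{dist}(\Omega_i,\Omega_j)>0$, the restriction of $\gamma_0(s_1,\dots,s_l)$ to $\Omega'_j$ reduces to the single term $s_jRw_j$, which is supported in $\Omega_j$ where $V\equiv0$. Hence on $\partial([1/R^2,1]^l)$,
$$
G_j(s_1,\dots,s_l)=I'_j(s_jRw_j)(s_jRw_j)=:\phi_j(s_j),
$$
so that $G_j$ depends there only on the $j$-th coordinate.

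Next I would analyze $\phi_j$ via $\phi_j(s)=s\,\frac{d}{ds}I_j(sRw_j)$. Hypothesis $(f_4)$ makes $s\mapsto I_j(sRw_j)$ strictly increasing and then strictly decreasing, with its unique maximum $c_j$ attained at $s=1/R$ (the value $sR=1$ corresponding to the critical point $w_j$); this is precisely the content of $\max_{s\in[1/R^2,1]}I_j(sRw_j)=c_j$ recorded above. Since $R>1$ we have $1/R^2<1/R<1$, so $\frac{d}{ds}I_j(sRw_j)>0$ at $s=1/R^2$ and $<0$ at $s=1$, giving $\phi_j(1/R^2)>0$ and $\phi_j(1)<0$. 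Therefore $G_j>0$ on the face $\{s_j=1/R^2\}$ and $G_j<0$ on the face $\{s_j=1\}$, for every $j$.

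With these consistent sign conditions on the $2l$ faces, the Poincar\'e--Miranda theorem yields a point $(t_1,\dots,t_l)\in[1/R^2,1]^l$ with $G(t_1,\dots,t_l)=0$, which is the assertion. The main obstacle is the middle step: justifying that on the boundary the coupled quantity $G_j$ collapses to the one-variable function $\phi_j(s_j)$ (this uses the disjointness of the supports, hence the choice of $\Omega'_j$ as sufficiently thin neighborhoods of $\Omega_j$) and extracting the endpoint signs of $\phi_j$ from the Nehari-type monotonicity granted by $(f_4)$. Once this is in place, the topological conclusion is routine.
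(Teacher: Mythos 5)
Your proposal is correct, and it shares the paper's core architecture: you build exactly the same vector field $G=\widetilde{\gamma}$, with components $s\mapsto I'_{\lambda,j}(\gamma(s))\gamma(s)$, on the cube $[1/R^2,1]^l$, and you use condition $(a)$ together with the disjointness of the supports of the $w_i$ (plus $V\equiv 0$ on $\Omega_j$) to collapse $G_j$ on $\partial([1/R^2,1]^l)$ to the one-variable function $\phi_j(s_j)=I'_j(s_jRw_j)(s_jRw_j)$ — precisely the reduction the paper performs. Where you genuinely diverge is the topological conclusion and the way the boundary is exploited. The paper only proves that $\widetilde{\gamma}$ does not vanish on the boundary, via a Nehari-manifold argument: if $\phi_j(1/R^2)=0$ or $\phi_j(1)=0$, then $\frac{1}{R}w_j$ or $Rw_j$ would satisfy $I'_j(v)v=0$, hence $I_j(v)\geq c_j$ (the mountain-pass level is the infimum over the Nehari manifold, valid under $(f_4)$), contradicting $I_j(\frac{1}{R}w_j),I_j(Rw_j)<c_j$; it then asserts $\deg(\widetilde{\gamma},(1/R^2,1)^l,0)=(-1)^l\neq 0$. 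You instead invoke Poincar\'e--Miranda, which requires the actual endpoint signs $\phi_j(1/R^2)>0>\phi_j(1)$; these you correctly extract from the $(f_4)$-fibering analysis (the map $t\mapsto I'_j(tw_j)(tw_j)/t^2$ is strictly decreasing and vanishes at $t=1$, so $\phi_j(s)$ is positive for $sR<1$ and negative for $sR>1$, using $R>1$ and $1/R^2<1/R<1$). The two tools are equivalent in strength, but your route is arguably more self-contained at one point: nonvanishing of $\widetilde{\gamma}$ on $\partial([1/R^2,1]^l)$ alone does not determine the degree; to get the value $(-1)^l$ one must also use that on the boundary $\widetilde{\gamma}=\widetilde{\gamma_0}$ is a product of one-dimensional maps each changing sign from $+$ to $-$ — which is exactly the sign information your proof makes explicit, and which the paper leaves implicit. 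Your two flagged hypotheses — that $\Omega'_j$ is chosen disjoint from $\overline{\Omega}_i$ for $i\neq j$, and the monotonicity structure from $(f_4)$ — are both part of the paper's standing setup, so there is no gap.
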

\begin{proof}
	Given $\gamma \in \Gamma_*$, consider the map $\widetilde{\gamma}:[1/R^2,1]^l \rightarrow \mathbb{R}^l$ defined by
	$$
	\widetilde{\gamma}(s_1,\cdots,s_l)=\left(I'_{\lambda,1}(\gamma(s_1,\cdots,s_l))\gamma(s_1,\cdots,s_l), \cdots, I'_{\lambda,l}(\gamma(s_1,\cdots,s_l))\gamma(s_1,\cdots,s_l)	\right).
	$$ 
	For $(s_1,\cdots,s_l) \in \partial([1/R^2,1]^l),$ we know that
	$$
	\gamma(s_1,\cdots,s_l)=\gamma_0(s_1,\cdots,s_l).
	$$
	Then,
	$$
	I'_{\lambda,j}(\gamma_0(s_1,\cdots,s_l))(\gamma_0(s_1,\cdots,s_l))=0 \Rightarrow s_j \not \in \{1/R^2,1\}, \forall j \in \Gamma,
	$$
	otherwise,
	$$
	I'_{\lambda,j}(\gamma_0(s_1,\cdots,s_l))(\gamma_0(s_1,\cdots,s_l))=0
	$$
	for $s_{j}=\frac{1}{R^2}$ or $s_{j}=1$, that is, 
	$$
	I'_{j}(\frac{1}{R}w_j)(\frac{1}{R}w_j)=0 \quad \mbox{or} \quad I'_{j}(Rw_j)(Rw_j)=0
	$$
	implying  that 
	$$
	I_{j}(\frac{1}{R}w_j)\geq c_j \quad \mbox{or} \quad I_{j}(Rw_j)\geq c_j,
	$$
	which contradicts the choice of $R$. Hence, 
	$$
	(0,0,\cdots,0) \not 	\in\widetilde{\gamma}(\partial([1/R^2,1]^l)).
	$$ 
	Then, by Topological Degree 
	$$
	deg(\widetilde{\gamma},(1/R^2,1)^l,(0,0,\cdots,0))=(-1)^l\not= 0,
	$$
	from where it follows that there exists $(t_1,t_2,\cdots,t_l)\in (1/R^2,1)^l$
	satisfying
$$
	I'_{\lambda,j}(\gamma(t_1,t_2,\cdots,t_l))(\gamma(t_1,t_2,\cdots,t_l))=0,\ \mbox{for}\ j \in \{1,2,\cdots,l\}.
$$
\mbox{\hspace{13,4 cm}}\end{proof}

\begin{proposition}\label{p3} \mbox{}\\
\noindent $a) \,\, \sum_{j=1}^{l}c_{\lambda,j}\leq b_{\lambda, \Gamma} \leq c_{\Gamma},\, \forall \lambda \geq 1.$ \\
\noindent $b)$ \,\, For $\gamma \in \Gamma_*\ \mbox{and}\ (s_1,\cdots,s_l)\in \partial([1/R^2,1]^l)$, we have 
$$
I_{\lambda}(\gamma(s_1, \cdots,s_l))< c_{\Gamma},\, \forall \lambda \geq 1. 
$$

\end{proposition}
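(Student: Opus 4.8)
The plan is to exploit that, since $\mathrm{dist}(\Omega_i,\Omega_j)>0$ for $i\neq j$, the (slightly enlarged) sets $\Omega'_1,\dots,\Omega'_l$ are pairwise disjoint, and therefore the energy splits exactly as
$$
I_\lambda(u)=\sum_{j=1}^{l}I_{\lambda,j}(u)+I_{\lambda,\mathbb{R}^N\setminus\Omega'_\Gamma}(u)
$$
for every $u\in E_\lambda$. Furthermore, each $w_j\in H_0^2(\Omega_j)$, extended by zero, is supported in $\Omega_j\subset\Omega'_j$ where $V\equiv 0$, so $I_\lambda(s_jRw_j)=I_j(s_jRw_j)$ and, since the supports are disjoint, $I_\lambda(\gamma_0(s_1,\dots,s_l))=\sum_{j=1}^{l}I_j(s_jRw_j)$. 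These two identities drive the entire argument.

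For the upper bound in $a)$ I would simply test the definition of $b_{\lambda,\Gamma}$ with the admissible path $\gamma_0\in\Gamma_*$. As the variables $s_j$ decouple,
$$
\max_{(s_1,\dots,s_l)\in[1/R^2,1]^l}I_\lambda(\gamma_0(s))=\sum_{j=1}^{l}\max_{s_j\in[1/R^2,1]}I_j(s_jRw_j)=\sum_{j=1}^{l}c_j=c_\Gamma,
$$
the middle equality being the normalization $\max_{s\in[1/R^2,1]}I_j(sRw_j)=c_j$; hence $b_{\lambda,\Gamma}\le c_\Gamma$. For the lower bound, fix any $\gamma\in\Gamma_*$ and use Lemma \ref{l6} to produce $(t_1,\dots,t_l)$ with $I'_{\lambda,j}(\gamma(t))\gamma(t)=0$ for each $j$. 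This places the restriction of $\gamma(t)$ to each $\Omega'_j$ on the Nehari manifold of $I_{\lambda,j}$, so that $I_{\lambda,j}(\gamma(t))\ge c_{\lambda,j}$, since under $(f_1)$–$(f_4)$ the mountain pass level equals the infimum of the energy over the Nehari manifold. Feeding this together with condition $(b)$, namely $I_{\lambda,\mathbb{R}^N\setminus\Omega'_\Gamma}(\gamma(t))\ge 0$, into the splitting identity gives
$$
\max_{s}I_\lambda(\gamma(s))\ge I_\lambda(\gamma(t))\ge\sum_{j=1}^{l}I_{\lambda,j}(\gamma(t))\ge\sum_{j=1}^{l}c_{\lambda,j},
$$
and taking the infimum over $\gamma\in\Gamma_*$ yields $\sum_{j=1}^{l}c_{\lambda,j}\le b_{\lambda,\Gamma}$.

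For $b)$ I would invoke condition $(a)$: on $\partial([1/R^2,1]^l)$ one has $\gamma=\gamma_0$, whence $I_\lambda(\gamma(s))=\sum_{j=1}^{l}I_j(s_jRw_j)$. Every boundary point has at least one coordinate $s_{j_0}\in\{1/R^2,1\}$, so that $s_{j_0}Rw_{j_0}$ is either $\tfrac{1}{R}w_{j_0}$ or $Rw_{j_0}$; by the choice of $R$ this term satisfies $I_{j_0}(s_{j_0}Rw_{j_0})<c_{j_0}$, while each remaining term obeys $I_j(s_jRw_j)\le c_j$. Summing produces the strict inequality $I_\lambda(\gamma(s))<c_\Gamma$, as desired.

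The delicate point, and the step I expect to be the main obstacle, is the Nehari estimate $I_{\lambda,j}(\gamma(t))\ge c_{\lambda,j}$ in the lower bound of $a)$: it is meaningful only if the restriction of $\gamma(t)$ to $\Omega'_j$ does not vanish, for otherwise that summand would contribute $0$ instead of $c_{\lambda,j}$. One must therefore argue that the point supplied by Lemma \ref{l6} has no vanishing component, using that $\gamma$ takes values in $E_\lambda\setminus\{0\}$ together with the boundary matching $\gamma=\gamma_0$ and the uniform separation of the Nehari manifolds from the origin granted by $(f_1)$, $(f_2)$ and $(f_4)$. Once this nonvanishing is secured, the remainder is bookkeeping on the splitting identity.
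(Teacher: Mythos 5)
Your proposal is correct and takes essentially the same route as the paper: the upper bound by testing $b_{\lambda,\Gamma}$ with the admissible path $\gamma_0$ and decoupling the variables so that $\max_{s_j}I_j(s_jRw_j)=c_j$, the lower bound by combining Lemma \ref{l6} with the Nehari-type characterization $c_{\lambda,j}=\inf\left\{I_{\lambda,j}(u):\ u\neq 0,\ I'_{\lambda,j}(u)u=0\right\}$ and condition $(b)$ of the class $\Gamma_*$, and part $b)$ via $\gamma=\gamma_0$ on $\partial([1/R^2,1]^l)$ with one coordinate $s_{j_0}\in\{1/R^2,1\}$ forcing a strict inequality. The ``delicate point'' you flag --- that the Nehari bound $I_{\lambda,j}(\gamma(t))\geq c_{\lambda,j}$ requires the restriction of $\gamma(t)$ to $\Omega'_j$ to be nontrivial --- is genuine, but the paper passes over it silently by simply invoking the characterization above, so on this step you are in fact more careful than the published argument.
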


\begin{proof}
	\begin{description}
		\item[a)]Since $\gamma_0 \in \Gamma_*$, 
		\begin{align*}
		b_{\lambda,\Gamma} &\leq \max_{(s_1,\cdots,s_l)\in [1/R^{2},1]^l}I_{\lambda,j}(\gamma_0(s_1,\cdots,s_l))&\\
		&\leq \max_{(s_1,\cdots,s_l)\in [1/R^{2},1]^l}I_{\lambda,j}(\sum_{i=1}^{l}s_iRw_i(x))&\\
		&\leq \sum_{j=1}^{l}\max_{s_j \in [1/R^{2},1]}I_{j}(s_jRw_j(x))&\\
		&\leq \sum_{j=1}^{l}c_j=c_{\Gamma.}&
		\end{align*}
		For each $\gamma \in \Gamma_*$ and  $(t_1,\cdots,t_l)\in [1/R^2,1]^{l}$ as in Lemma \ref{l6}, we find 
		$$
		I_{\lambda,j}(\gamma(t_1,\cdots, t_l)) \geq c_{\lambda,j}, \forall j \in \Gamma.
		$$
		In the last inequality we have used the following the equality below 	
		$$
		c_{\lambda,j}=\inf\{I_{\lambda,j}(u);\ u \in E_{\lambda}\setminus \{0\};\,  I'_{\lambda,j}(u)u=0\}. 
		$$ 
		On the other hand, recalling that $I_{\lambda,\mathbb{R}^N\setminus \Omega'_{\Gamma}}(\gamma(s_1,\cdots,s_l))\geq 0$, we derive that
		$$
		I_{\lambda}(\gamma(s_1, \cdots, s_l)) \geq
		\sum_{j=1}^{l}I_{\lambda,j}(\gamma(s_1, \cdots, s_l)),
		$$ 
		and so,
		$$
		\max_{(s_1,\cdots, s_l)\in [1/R^2,1]^l}I_{\lambda}(\gamma(s_1, \cdots, s_l)) \geq I_{\lambda}(\gamma(t_1, \cdots, t_l)) \geq \sum^{l}_{j=1}c_{\lambda,j}.
		$$ 
		The last inequality combined with the definition of $b_{\lambda, \Gamma}$ gives  
		$$
		b_{\lambda, \Gamma}\geq \sum_{j=1}^{l}c_{\lambda, j},
		$$ 
		This completes the proof of $a).$
		\item[b)] As $\gamma(s_1,\cdots,s_l)=\gamma_0(s_1,\cdots,s_l)$ on $\partial([1/R^2,1]^l),$ we derive that
		$$
		I_{\lambda}(\gamma_0(s_1,\cdots,s_l))=\sum_{j=1}^{l}I_j(s_jRw_j), \forall (s_1,\cdots,s_l) \in \partial([1/R^2,1]^l).
		$$
		Since 
		$$
		I_j(s_jRw_j) \leq c_j, \quad \forall j \in \Gamma
		$$ 
		and there is $j_0 \in \Gamma$, such that $s_{j_0} \in \{1/R^2,1 \}$, we have  
		$$
		I_{\lambda}(\gamma_0(s_1,\cdots,s_l)) < c_{\Gamma}.
		$$
		
	\end{description}
\mbox{\hspace{13 cm}} \end{proof}

\begin{corollary}
	
	$b_{\lambda, \Gamma} \rightarrow c_{\Gamma},$ when $\lambda
	\rightarrow +\infty.$
	
\end{corollary}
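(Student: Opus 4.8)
The plan is to squeeze $b_{\lambda,\Gamma}$ between the two quantities already controlled in Proposition \ref{p3}. Part $a)$ of that proposition gives $\sum_{j=1}^{l} c_{\lambda,j} \leq b_{\lambda,\Gamma} \leq c_{\Gamma}=\sum_{j=1}^{l}c_j$ for every $\lambda \geq 1$, so the bound $\limsup_{\lambda \to +\infty} b_{\lambda,\Gamma} \leq c_{\Gamma}$ is immediate. It therefore suffices to prove that $c_{\lambda,j} \to c_j$ as $\lambda \to +\infty$ for each $j \in \Gamma$; summing over $j$ and using the left-hand inequality then forces $\liminf_{\lambda \to +\infty} b_{\lambda,\Gamma} \geq c_{\Gamma}$, and the corollary follows.

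First I would establish $c_{\lambda,j} \leq c_j$ for all $\lambda \geq 1$. Since $\Omega_j \subset \Omega'_j$ and any $w \in H_0^2(\Omega_j)$ extended by zero belongs to $H^2(\Omega'_j)$, and since $V \equiv 0$ on $\Omega_j$ while the extension vanishes on $\Omega'_j \setminus \Omega_j$, this extension preserves energy, that is $I_{\lambda,j}(\tilde{w})=I_j(w)$. Hence the mountain pass geometry of $I_j$ transports into $H^2(\Omega'_j)$ independently of $\lambda$, and comparing the two minimax classes yields $c_{\lambda,j} \leq c_j$.

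For the reverse inequality I would use the critical points $v_{\lambda,j} \in H^2(\Omega'_j)$ with $I_{\lambda,j}(v_{\lambda,j})=c_{\lambda,j}$ and $I'_{\lambda,j}(v_{\lambda,j})=0$ constructed above. Fix $\lambda_n \to +\infty$ and write $v_n=v_{\lambda_n,j}$. By $(f_3)$, $c_{\lambda_n,j}=I_{\lambda_n,j}(v_n)-\frac{1}{\theta}I'_{\lambda_n,j}(v_n)v_n \geq (\frac{1}{2}-\frac{1}{\theta})\|v_n\|^2_{\lambda_n,\Omega'_j}$, so by the bound $c_{\lambda_n,j}\leq c_j$ the sequence $\{v_n\}$ is bounded in $H^2(\Omega'_j)$ and, up to a subsequence, $v_n \rightharpoonup v$ in $H^2(\Omega'_j)$. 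Repeating the measure estimate on the sets $C_m=\{V>1/m\}$ from the proof of Proposition \ref{p2} $(i)$, together with the Trace Theorem applied to $v$ and $\nabla v$, shows $v \in H_0^2(\Omega_j)$ and that $v$ is a weak solution of (\ref{4}), i.e. $I'_j(v)=0$. Moreover $v \neq 0$: if $v=0$, the compact embeddings $H^2(\Omega'_j)\hookrightarrow L^2(\Omega'_j),L^q(\Omega'_j)$ together with the subcritical growth of $f$ would give $\int_{\Omega'_j}f(v_n)v_n\,dx \to 0$, whence $\|v_n\|^2_{\lambda_n,\Omega'_j}=I'_{\lambda_n,j}(v_n)v_n+\int_{\Omega'_j}f(v_n)v_n\,dx \to 0$ and thus $c_{\lambda_n,j}\to 0$, contradicting the uniform lower bound $c_{\lambda_n,j}\geq c_*>0$ obtained exactly as in Lemma \ref{l3}.

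Finally I would combine weak lower semicontinuity with the Nehari characterization. Since $\lambda_n V \geq 0$ and $v$ vanishes outside $\Omega_j$, weak lower semicontinuity of the $H^2$ norm gives $\liminf_n \|v_n\|^2_{\lambda_n,\Omega'_j} \geq \|v\|^2_{H_0^2(\Omega_j)}$, while the compact embedding and subcritical growth yield $\int_{\Omega'_j}F(v_n)\,dx \to \int_{\Omega_j}F(v)\,dx$. Hence $\liminf_n c_{\lambda_n,j}=\liminf_n I_{\lambda_n,j}(v_n)\geq I_j(v)$. As $v$ is a nontrivial critical point of $I_j$, it lies on the Nehari manifold, so by $(f_4)$ and the characterization $c_j=\inf\{I_j(u):u\in H_0^2(\Omega_j)\setminus\{0\},\ I'_j(u)u=0\}$ we get $I_j(v)\geq c_j$. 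Together with $c_{\lambda_n,j}\leq c_j$ this gives $c_{\lambda_n,j}\to c_j$. The main obstacle is the identification $v \in H_0^2(\Omega_j)$ in the third paragraph: because $u \mapsto u^{\pm}$ need not preserve $H^2$ for the biharmonic operator, one cannot truncate and must instead control both $v$ and $\nabla v$ on $\partial\Omega_j$ through the vanishing estimate on $\{V>0\}$ and the $H^2$ trace theorem, precisely as in Proposition \ref{p2}.
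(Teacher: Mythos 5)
Your proposal is correct and follows the same skeleton as the paper: squeeze $b_{\lambda,\Gamma}$ between $\sum_{j=1}^{l}c_{\lambda,j}$ and $c_\Gamma$ using Proposition \ref{p3}(a), reducing everything to the convergence $c_{\lambda,j}\to c_j$. The only real difference is that the paper disposes of this last step in one line, citing the arguments of Ding and Tanaka \cite{D&T}, whereas you actually carry it out, and your execution is sound: the zero extension $H_0^2(\Omega_j)\hookrightarrow H^2(\Omega'_j)$ is energy-preserving (the analogue of $H_0^1$ extension does hold at the $H^2_0$ level), giving $c_{\lambda,j}\leq c_j$; boundedness of the least-energy critical points $v_{\lambda_n,j}$ via $(f_3)$, the uniform lower bound $c_{\lambda,j}\geq c_*>0$ as in Lemma \ref{l3} (whose proof only uses $\|u\|_{\lambda,\Omega'_j}\geq \|u\|_{H^2(\Omega'_j)}$, hence is $\lambda$-uniform on $\Omega'_j$ as well), the identification $v\in H_0^2(\Omega_j)$ via the sets $C_m$ and the trace theorem for $v$ and $\nabla v$, weak lower semicontinuity of $u\mapsto\int(|\Delta u|^2+|u|^2)$, compactness of the embedding on the bounded set $\Omega'_j$ for the $F$-term, and finally the Nehari characterization of $c_j$ under $(f_4)$. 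You correctly flag the biharmonic-specific point: one cannot truncate with $u^{\pm}$, so membership in $H_0^2(\Omega_j)$ must come from the vanishing of $v$ on $\{V>0\}$ plus the trace theorem, exactly the device the paper itself uses in Proposition \ref{p2}(i). Two cosmetic remarks: your argument implicitly assumes, as the paper does (cf.\ the identity $\mathbb{R}^N\setminus\overline{\Omega}=\bigcup_m C_m$ in Proposition \ref{p2}), that $V>0$ a.e.\ outside $\overline{\Omega}$, so that $v=0$ a.e.\ on $\Omega'_j\setminus\overline{\Omega}_j$; and since your limit $c_{\lambda_n,j}\to c_j$ is obtained along a subsequence of an arbitrary sequence $\lambda_n\to+\infty$, you should close with the standard remark that every subsequence admits a further subsequence converging to $c_j$, which yields the full limit. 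Neither point is a gap in substance.
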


\begin{proof}
Using the same arguments found in \cite{D&T}, it is possible to prove that $c_{\lambda, j} \rightarrow c_j$ for each $j \in \Gamma$. Therefore, by Proposition $\ref{p3}$, $b_{\lambda,\Gamma} \rightarrow
	c_{\Gamma}$ when $\lambda \rightarrow +\infty.$
\end{proof}

\section{Proof of the Main Theorem}
Hereafter, we consider
$$
M=1+\sum_{j=1}^{l}\sqrt{\left(\frac{1}{2}-\frac{1}{\theta}\right)c_j},
$$
$$
\overline{B}_{M+1}(0)=\{ u \in E_{\lambda};\|u\|_{\lambda}\leq M+1\},
$$ 
and for small $\mu>0$
$$
A_{\mu}^{\lambda}=\left\{ u \in \overline{B}_{M+1}; \left|\left|u\right|\right|_{\lambda,\mathbb{R}^N\setminus \Omega'_{\Gamma}}\leq \mu,\ I_{\lambda,\mathbb{R}^N\setminus 	\Omega'_{\Gamma}}(u)\geq 0\ \mbox{and}\ \left|I_{\lambda,j}(u)-c_j\right|\leq \mu, \forall j \in  \Gamma \right\},
$$
and
$$
I_{\lambda}^{c_{\Gamma}}=\left\{ u \in E_{\lambda}/ I_{\lambda}(u)\leq c_{\Gamma}\right\}.
$$
Note that $A_{\mu}^{\lambda}\cap I_{\lambda}^{c_{\Gamma}} \neq \emptyset,$  because $w= \sum_{j=1}^{l}w_j \in A_{\mu}^{\lambda}\cap I_{\lambda}^{c_{\Gamma}} .$ Fixing
\begin{equation}\label{30}
0< \mu < \frac{1}{4}\min\{c_j;j \in  \Gamma\},
\end{equation}
we have the following uniform estimate from below for $\|I'_{\lambda}(u)\|$ in the set $\left(A_{2\mu}^{\lambda}\setminus
A_{\mu}^{\lambda}\right)\cap I_{\lambda}^{c_{\Gamma}}.$

\begin{proposition}\label{p4}
	Let $\mu >0$ satisfy $(\ref{30})$. Then, there exist $\sigma_0>0$ independent of $\lambda$ and
	$\Lambda_* \geq 1$ such that
	\begin{equation}\label{31}
	\ \|I'_{\lambda}(u)\| \geq \sigma_0\ \mbox{for}\ \lambda \geq
	\Lambda_*\ \mbox{and for all}\ u \in
	\left(A_{2\mu}^{\lambda}\setminus A_{\mu}^{\lambda}\right)\cap
	I_{\lambda}^{c_{\Gamma}}.
	\end{equation}
\end{proposition}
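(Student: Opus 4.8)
The plan is to argue by contradiction. Suppose the conclusion fails; then for every candidate pair $\sigma_0,\Lambda_*$ the estimate is violated, so we can produce sequences $\lambda_n \to +\infty$ and $u_n \in \left(A_{2\mu}^{\lambda_n}\setminus A_{\mu}^{\lambda_n}\right)\cap I_{\lambda_n}^{c_{\Gamma}}$ with $\|I'_{\lambda_n}(u_n)\| \to 0$. Since $u_n \in \overline{B}_{M+1}(0)$, the sequence $\{\|u_n\|_{\lambda_n}\}$ is bounded, and because $I_{\lambda_n}(u_n)\leq c_{\Gamma}$ we may pass to a subsequence with $I_{\lambda_n}(u_n)\to c$ for some $c$; the argument of Lemma \ref{l1} forces $c\geq 0$, hence $c\in[0,c_{\Gamma}]$. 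Thus $\{u_n\}$ is a $(PS)_{\infty}$ sequence in the sense of Section 3.

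Next I would invoke Proposition \ref{p2}, which yields a subsequence and a limit $u\in H^2(\mathbb{R}^N)$ with $u_n \rightharpoonup u$, with $u\equiv 0$ on $\mathbb{R}^N\setminus\Omega_{\Gamma}$, with each restriction $u|_{\Omega_j}$ a weak solution of (\ref{20}) (hence a critical point of $I_j$ on $H_0^2(\Omega_j)$), together with the strong convergence $\|u_n-u\|_{\lambda_n}^2\to 0$ and the limits in item (iii). This strong convergence drives all subsequent passages to the limit: since $u$ vanishes on $\mathbb{R}^N\setminus\Omega_{\Gamma}\supset \mathbb{R}^N\setminus\Omega'_{\Gamma}$, we get $\|u_n\|_{\lambda_n,\mathbb{R}^N\setminus\Omega'_{\Gamma}}\to 0$, and the compactness argument for the nonlinear term used in Lemma \ref{l5} and in the proof of Proposition \ref{p2} gives $I_{\lambda_n,\mathbb{R}^N\setminus\Omega'_{\Gamma}}(u_n)\to 0$ together with $I_{\lambda_n,j}(u_n)\to I_j(u|_{\Omega_j})$ for every $j\in\Gamma$.

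The \emph{key step} is a squeezing argument at the level of energies. Splitting the energy over the partition $\mathbb{R}^N=\Omega'_{\Gamma}\cup(\mathbb{R}^N\setminus\Omega'_{\Gamma})$ and letting $n\to+\infty$ gives $c=\sum_{j\in\Gamma}I_j(u|_{\Omega_j})$. On the other hand, passing to the limit in $|I_{\lambda_n,j}(u_n)-c_j|\leq 2\mu$ yields $I_j(u|_{\Omega_j})\geq c_j-2\mu$, and since $2\mu<\frac{1}{2}\min_j c_j$ by (\ref{30}), each $I_j(u|_{\Omega_j})>\frac{1}{2}c_j>0$; in particular $u|_{\Omega_j}\not\equiv 0$. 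Because $(f_1)$--$(f_4)$ guarantee that $c_j$ is the least energy among the nontrivial critical points of $I_j$ (the mountain pass level coincides with the ground-state level on the Nehari manifold, using the monotonicity $(f_4)$, exactly as in the characterization of $c_{\lambda,j}$ recorded in the proof of Proposition \ref{p3}), every such $u|_{\Omega_j}$ satisfies $I_j(u|_{\Omega_j})\geq c_j$. Summing and using $c\leq c_{\Gamma}=\sum_j c_j$ forces $\sum_j c_j\leq \sum_j I_j(u|_{\Omega_j})=c\leq \sum_j c_j$, so all inequalities are equalities and $I_j(u|_{\Omega_j})=c_j$ for every $j\in\Gamma$.

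Finally I would close by contradiction. From $I_{\lambda_n,j}(u_n)\to c_j$ we obtain $|I_{\lambda_n,j}(u_n)-c_j|<\mu$ for all large $n$ and all $j\in\Gamma$, while $\|u_n\|_{\lambda_n,\mathbb{R}^N\setminus\Omega'_{\Gamma}}\to 0<\mu$; the remaining two defining conditions of $A_{\mu}^{\lambda_n}$, namely $u_n\in\overline{B}_{M+1}(0)$ and $I_{\lambda_n,\mathbb{R}^N\setminus\Omega'_{\Gamma}}(u_n)\geq 0$, are inherited unchanged from membership in $A_{2\mu}^{\lambda_n}$. Hence $u_n\in A_{\mu}^{\lambda_n}$ for all large $n$, contradicting $u_n\notin A_{\mu}^{\lambda_n}$. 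I expect the delicate point to be the third paragraph: making rigorous that $c_j$ is precisely the least nonzero critical value of $I_j$, so that nontriviality upgrades $I_j(u|_{\Omega_j})\geq c_j-2\mu$ to $I_j(u|_{\Omega_j})\geq c_j$, and checking that the energy splits exactly over $\Omega'_{\Gamma}$ and its complement so that the squeeze $\sum_j c_j=c\leq c_{\Gamma}$ pins down each component level. The analytic passages to the limit are then routine consequences of the strong convergence supplied by Proposition \ref{p2}.
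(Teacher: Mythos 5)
Your proof is correct and follows essentially the same route as the paper: a contradiction argument producing a $(PS)_\infty$ sequence, Proposition \ref{p2} for strong convergence to a limit $u$ solving (\ref{20}), the least-energy squeeze $\sum_{j\in\Gamma} c_j \le \sum_{j\in\Gamma} I_j(u|_{\Omega_j}) = c \le c_\Gamma$ forcing $I_j(u|_{\Omega_j})=c_j$, and the conclusion that $u_n \in A_{\mu}^{\lambda_n}$ for large $n$, contradicting $u_n \notin A_{\mu}^{\lambda_n}$. The only (cosmetic) difference is that where the paper splits into the cases $u|_{\Omega_j}\neq 0$ for all $j\in\Gamma$ versus $u|_{\Omega_{j_0}}=0$ for some $j_0$ --- handling the latter by noting $|I_{\lambda_n,j_0}(u_n)-c_{j_0}|\to c_{j_0}\geq 4\mu$, which violates $u_n\in A_{2\mu}^{\lambda_n}$ --- you rule out vanishing upfront by passing to the limit in $|I_{\lambda_n,j}(u_n)-c_j|\le 2\mu$ to get $I_j(u|_{\Omega_j})\geq c_j-2\mu>0$, a slightly tidier use of the same constraint (\ref{30}).
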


\begin{proof}
	Arguing by contradiction, suppose that there are $\lambda_n \rightarrow +\infty$  and $u_n \in E_{\lambda_n}$, with
	$$
	u_n \in \left(A_{2\mu}^{\lambda_n}\setminus A_{\mu}^{\lambda_n}\right)\cap I_{\lambda}^{c_{\Gamma}} \quad \mbox{and} \quad \|I'_{\lambda_n}(u_n)\| \rightarrow 0.
	$$
	Since $u_n \in A_{2\mu}^{\lambda_n}$, the sequence
	$\left\{\|u_n\|_{\lambda_n}\right\}$ is bounded. Consequently
	$\left\{I_{\lambda_n}(u_n)\right\}$ is also bounded. Then,  passing to a subsequence if necessary,
	$$
	I_{\lambda_n}(u_n) \rightarrow c \in (-\infty,c_{\Gamma}].
	$$
		By Proposition \ref{p2}, passing to a subsequence if necessary, $u_n
	\rightarrow u$ in $H^2(\mathbb{R}^N)$  and $u \in
	H_{0}^{2}(\Omega_{\Gamma})$ is a solution of the problem (\ref{20}). Moreover,
	\begin{align}
	&\lambda_n \int_{\mathbb{R}^N}V(x)\left|u_n\right|^2dx \rightarrow 0,& \label{32}\\
	&\left|\left|u_n\right|\right|^2_{\lambda_n,\mathbb{R}^N\setminus \Omega_{\Gamma}}\rightarrow 0 & \label{33}\\
	&\left|\left|u_n\right|\right|^2_{\lambda_n,\Omega'_j}\rightarrow \int_{\Omega_j}\left[ \left|\Delta u\right|^2+\left|u\right|^2\right]dx,\ \forall j\in \Gamma.& \label{34}
	\end{align}
	Since $c_{\Gamma}=\sum_{j=1}^{l}c_j$ and $c_j$ is the least energy level for $I_j$, one of the following cases occurs:
	\begin{description}
		\item[i)] $u\left|_{\Omega_j}\neq 0\right.,\ \forall j \in \Gamma,$ or
		\item[ii)]$u\left|_{\Omega_{j_0}}=0\right.,$ for some $j_0 \in \Gamma.$
	\end{description}
	If $i)$ happens, from $(\ref{32})-(\ref{34})$ 
	$$
	I_j(u)=c_j, \quad \forall j \in \Gamma.
	$$
	Hence $u_n \in A_{\mu}^{\lambda_n}$ for $n$ large enough, which is a contradiction. 
	
	If $ii)$ happens, from $(\ref{32})\ \mbox{and}\ (\ref{33})$  $$\left|I_{\lambda_{n},j_0}(u_n)-c_{j_0})\right| \rightarrow c_{j_0} \geq 4\mu,$$
	which contradicts the  hypothesis  $u_n \in A_{2 \mu}^{\lambda_n}\setminus A_{\mu}^{\lambda_n}$ for all $n \in \mathbb{N}$. Since $i)$ or $ii)$ cannot happen, we get an absurd, finishing the proof.
\end{proof}

\begin{proposition} \label{p5}
	Let $\mu$ satisfy $(\ref{30})$ and $\Lambda_* \geq 1$
	the constant given in the Proposition \ref{p3}. Then for $\lambda \geq
	\Lambda_*$, there exists $u_{\lambda}$ a solution of $(\ref{1})$
	satisfying $u_{\lambda}\in A_{\mu}^{\lambda}\cap
	I_{\lambda}^{c_{\Gamma}}.$
\end{proposition}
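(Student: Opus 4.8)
The goal is to produce a nontrivial critical point of $I_\lambda$ sitting inside $A_\mu^\lambda\cap I_\lambda^{c_\Gamma}$, and the natural tool is a deformation/minimax argument built around the level $b_{\lambda,\Gamma}$ together with the quantitative gradient bound of Proposition \ref{p4}. First I would fix $\lambda\ge\Lambda_*$ (combining $\Lambda_*$ from Proposition \ref{p4} with the constant $\Lambda(c_\Gamma)$ from Proposition \ref{p1}, so that $I_\lambda$ satisfies $(PS)_c$ for every $c\in[0,c_\Gamma]$). The strategy is then to apply the Deformation Lemma on the sublevel set $I_\lambda^{c_\Gamma}$, but \emph{restricted} to the region $A_{2\mu}^\lambda\setminus A_\mu^\lambda$, which is precisely the novelty the authors advertise in the introduction (deforming the sets rather than penalizing the nonlinearity). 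Proposition \ref{p4} gives $\|I_\lambda'(u)\|\ge\sigma_0>0$ uniformly on $(A_{2\mu}^\lambda\setminus A_\mu^\lambda)\cap I_\lambda^{c_\Gamma}$, which is exactly the hypothesis needed to construct a pseudo-gradient deformation flow that decreases $I_\lambda$ while pushing points out of the annular shell $A_{2\mu}^\lambda\setminus A_\mu^\lambda$.

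\textbf{Carrying it out.} The argument I would run is by contradiction: suppose $I_\lambda$ has no critical point in $A_\mu^\lambda\cap I_\lambda^{c_\Gamma}$. Then in fact $I_\lambda$ has no critical point at level close to $c_\Gamma$ inside $A_\mu^\lambda$, and using Proposition \ref{p4} one gets a uniform lower bound on $\|I_\lambda'\|$ throughout the relevant region. I would then invoke the Deformation Lemma to obtain a continuous deformation $\eta:[0,1]\times E_\lambda\to E_\lambda$ that lowers the functional below $c_\Gamma$ on the image of a minimizing path, while leaving the boundary behavior controlled by condition $(a)$ in the definition of $\Gamma_*$. The point of the construction is that starting from the path $\gamma_0$ (or a near-optimal $\gamma\in\Gamma_*$ with $\max I_\lambda(\gamma)$ close to $b_{\lambda,\Gamma}$), the deformed path $\eta(1,\gamma(\cdot))$ still belongs to $\Gamma_*$ — here Lemma \ref{l6} and part $b)$ of Proposition \ref{p3} guarantee that the boundary constraints and the sign condition $(b)$ survive the deformation — yet it achieves a strictly lower max value, contradicting $b_{\lambda,\Gamma}\le c_\Gamma$ together with the lower bound $b_{\lambda,\Gamma}\ge\sum_j c_{\lambda,j}$ and the convergence $b_{\lambda,\Gamma}\to c_\Gamma$ from the Corollary.

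\textbf{The obstacle.} The delicate point, and the one I expect to consume most of the work, is verifying that the deformation \emph{keeps the path inside the class $\Gamma_*$}. Two constraints must be preserved simultaneously: the boundary identity $\gamma=\gamma_0$ on $\partial([1/R^2,1]^l)$, and the sign condition $(b)$ that $I_{\lambda,\mathbb{R}^N\setminus\Omega'_\Gamma}(\gamma)\ge 0$. For the boundary, part $b)$ of Proposition \ref{p3} shows $I_\lambda(\gamma_0)<c_\Gamma$ strictly on $\partial([1/R^2,1]^l)$, so one can arrange the deformation to be the identity near the boundary (the vector field is cut off to vanish where the functional is already safely below $c_\Gamma$), preserving $(a)$. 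The genuinely biharmonic-specific difficulty is that one cannot freely split or truncate $H^2$ functions, so the sign condition $(b)$ must be monitored along the flow rather than re-imposed by a projection; I would handle this by constructing the pseudo-gradient vector field so that it does not increase $I_{\lambda,\mathbb{R}^N\setminus\Omega'_\Gamma}$, or by confining the flow to the region where Proposition \ref{p4} already forces points out of the shell before $(b)$ can be violated. Once the deformed path is certified to lie in $\Gamma_*$, the contradiction with the definition of $b_{\lambda,\Gamma}$ is immediate, and the existence of the desired solution $u_\lambda\in A_\mu^\lambda\cap I_\lambda^{c_\Gamma}$ follows; that $u_\lambda$ solves the original problem (\ref{1}) rather than a modified one is automatic here, since no penalization was introduced.
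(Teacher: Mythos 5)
Your overall architecture does match the paper's (contradiction, pseudo-gradient flow supported near the $A$-sets, Proposition \ref{p4}, and the limit $b_{\lambda,\Gamma}\to c_{\Gamma}$), but two of your concrete mechanisms would fail, and the quantitative heart of the proof is missing. On the preservation of $(a)$ and $(b)$: you propose a level-based cutoff (``vanish where the functional is already safely below $c_{\Gamma}$'') and, for $(b)$, a specially engineered pseudo-gradient that does not increase $I_{\lambda,\mathbb{R}^N\setminus\Omega'_{\Gamma}}$. The level-based cutoff would require a lower bound on $\|I'_{\lambda}\|$ on an entire strip $\{c_{\Gamma}-\varepsilon\le I_{\lambda}\le c_{\Gamma}\}$, which you do not have: the contradiction hypothesis excludes critical points only inside $A_{\mu}^{\lambda}\cap I_{\lambda}^{c_{\Gamma}}$, and Proposition \ref{p4} controls the gradient only on the shell $(A_{2\mu}^{\lambda}\setminus A_{\mu}^{\lambda})\cap I_{\lambda}^{c_{\Gamma}}$; at high levels outside $A_{2\mu}^{\lambda}$ the gradient may well vanish. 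The paper instead cuts off by set membership ($\Psi=1$ on $A_{3\mu/2}^{\lambda}$, $\Psi=0$ off $A_{2\mu}^{\lambda}$), which settles both of your worries at once: moving trajectories remain in $A_{2\mu}^{\lambda}$, whose very definition contains the inequality $I_{\lambda,\mathbb{R}^N\setminus\Omega'_{\Gamma}}(u)\ge 0$, so $(b)$ needs no custom vector field; and since $\gamma_0$ is supported in $\overline{\Omega}_{\Gamma}$ (so $I_{\lambda,\mathbb{R}^N\setminus\Omega'_{\Gamma}}(\gamma_0)=0$) and its boundary values lie outside $A_{2\mu}^{\lambda}$ by the choice (\ref{30}) of $\mu$, the flow fixes $\gamma_0$ on $\partial([1/R^2,1]^l)$, giving $(a)$. (Incidentally, Lemma \ref{l6} plays no role in preserving the constraints; it is used only for the lower bound in Proposition \ref{p3}\,$a)$.)

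The genuine gap is that your contradiction is not quantified in $\lambda$. Deforming a near-optimal path $\gamma\in\Gamma_*$ cannot contradict the definition of $b_{\lambda,\Gamma}$, because its maximum may be attained outside $A_{2\mu}^{\lambda}$, where no gradient bound is available. And deforming $\gamma_0$ at fixed $\lambda$ to get a maximum merely strictly below $c_{\Gamma}$ contradicts nothing, since $b_{\lambda,\Gamma}<c_{\Gamma}$ is perfectly consistent with Proposition \ref{p3}; moreover the only gradient bound inside $A_{\mu}^{\lambda}$ is the $(PS)$-derived constant $d_{\lambda}$, which depends on $\lambda$. The paper's proof deforms $\gamma_0$ for a time $T$ of order $\sigma_0\mu/(2K_*\tilde{d}_{\lambda})$ and runs a two-case analysis: either the trajectory stays in $A_{3\mu/2}^{\lambda}$ on all of $[0,T]$, yielding a decrease $\tilde{d}_{\lambda}T=\sigma_0\mu/2K_*$; or it must cross the annulus $A_{3\mu/2}^{\lambda}\setminus A_{\mu}^{\lambda}$, which by the Lipschitz estimate $|I_{\lambda}(u)-I_{\lambda}(v)|\le K_*\|u-v\|_{\lambda,\Omega'_j}$ has width at least $\mu/2K_*$ in norm, hence (speed $\le 1$) the trajectory spends time at least $\mu/2K_*$ where $\|I'_{\lambda}\|\ge\sigma_0$, again a decrease $\sigma_0\mu/2K_*$. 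Either way $b_{\lambda,\Gamma}\le\max\{m_0,\,c_{\Gamma}-\sigma_0\mu/2K_*\}$, a bound \emph{independent of} $\lambda$, and this contradicts $b_{\lambda,\Gamma}\to c_{\Gamma}$ once $\lambda$ (i.e.\ $\Lambda_*$) is taken large enough. This uniform-in-$\lambda$ decrement, and the resulting asymptotic nature of the contradiction, is exactly what your proposal leaves out.
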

\begin{proof}
	We will suppose, by contradiction, that there are no critical points of
	$I_{\lambda}$ in $A_{\mu}^{\lambda}\cap I_{\lambda}^{c_{\Gamma}}.$
	By Proposition $\ref{p1}$, $I_{\lambda}$ satisfies the $(PS)_d$ condition
	for $d \in [0,c_{\Gamma}] $ and $\lambda$ large enough. Thereby, there exists $d_{\lambda} >0$ such that
	$$
	\left|\left|I'_{\lambda}(u)\right|\right| \geq d_{\lambda},\
	\forall u \in A_{\mu}^{\lambda}\cap I_{\lambda}^{c_{\Gamma}}.
	$$
	On the other hand, by Proposition \ref{p4}, 
	$$
	\|I'_{\lambda}(u)\|
	\geq \sigma_0,\ \forall u \in (A_{2\mu}^{\lambda} \setminus
	A_{\mu}^{\lambda})\cap I_{\lambda}^{c_{\Gamma}},
	$$ 
	where $\sigma_0$ is independent of $\lambda.$ Now, we define the continuous functions
	$\Psi:E_{\lambda}\rightarrow \mathbb{R}$ and
	$H:I_{\lambda}^{c_{\Gamma}}\rightarrow \mathbb{R}$ by
	\begin{align*}
	\Psi(u)= 1,&  \quad u \in  A_{3\mu/2}^{\lambda},\\
	\Psi(u)=0, & \quad u \not \in A_{2\mu}^{\lambda}, \\
	0 \leq \Psi(u)\leq 1,&\quad \mbox{for}\ u \in E_{\lambda},
	\end{align*}
	and
	$$
	H(u)=\left\{ \begin{array}{cc}
	-\Psi(u)\left|\left|Y(u)\right|\right|^{-1}Y(u),&\ u \in A_{2\mu}^{\lambda},\\
	0,& u\not \in A_{2\mu}^{\lambda},
	\end{array}
	\right.
	$$ 
	where $Y$ is a pseudogradient  vector field for $I_{\lambda}$ on 
	$$
	X=\{ u \in E_{\lambda}; I_{\lambda}(u) \neq 0\}.
	$$ 
	Notice that 
	$$
	\left|\left|H(u)\right|\right| \leq 1 \ \mbox{for all}\ \lambda \geq \Lambda_*\ \mbox{and}\ u \in I_{\lambda}^{c_{\Gamma}}. 
	$$
	The above information ensures the existence of a flow $\eta: [0,+\infty) \times I_{\lambda}^{c_{\Gamma}} \rightarrow  I_{\lambda}^{c_{\Gamma}}$ defined by
	$$
	\left\{ \begin{array}{ccc}
	\dfrac{d\eta(t,u)}{dt}&=& H(\eta(t,u))\\
	\eta(0,u)&=&u\in I_{\lambda}^{c_{\Gamma}},
	\end{array}
	\right.
	$$
	verifying
	\begin{equation}\label{35}
	\dfrac{d I_{\lambda}(\eta(t,u))}{dt}\leq -\Psi(\eta(t,u))\left|\left|I'_{\lambda}(\eta(t,u))\right|\right|\leq 0,
	\end{equation}
	\begin{equation}\label{36}
	\left|\left|\dfrac{d\eta}{dt}\right|\right|= \left|\left|H(\eta)\right|\right|\leq 1,
	\end{equation}
	and
	\begin{equation}\label{37}
	\eta(t,u)=u,\ \forall\ t\geq 0\ \mbox{and}\ u \in I_{\lambda}^{c_{\Gamma}}\setminus A_{2 \mu}^{\lambda}.
	\end{equation}
In what follows, we set
$$
\beta(s_1,\cdots,s_l)=\eta(T,\gamma_0(s_1,\cdots,s_l)),\
	\forall (s_1,\cdots,s_l)\in [1/R^2,1]^l,
$$ 
where $T>0$ will be fixed later on. 

Once
$$
	\gamma_0(s_1,\cdots,s_l)\not \in A_{2 \mu}^{\lambda},\ \forall
	(s_1,\cdots,s_l)\in \partial ([1/R^2,1]^l),
$$ 
we deduce that
$$
	\beta(s_1,\cdots,s_l)=\gamma_0(s_1,\cdots,s_l),\
	\forall (s_1,\cdots,s_l)\in \partial([1/R^2,1]^l).
$$ 
Moreover, 	it is easy to check that
$$
I_{\lambda,\mathbb{R}^N\setminus 	\Omega'_{\Gamma}}(\beta(s_1,\cdots,s_l))\geq 0, \quad \forall (s_1, \cdots,s_l)\in [1/R^2,1]^l,
$$
showing that $\beta \in \Gamma_*$.
	
	Note that $supp(\gamma_0(s_1,\cdots,s_l)) \subset
	\overline{\Omega}_{\Gamma}$ for all $(s_1,\cdots,s_l) \in
	[1/R^2,1]^l$ and  $I_{\lambda}(\gamma_0(s_1,\cdots,s_l))$
	independent of $\lambda \geq \Lambda.$ Furthermore,
	$$
	I_{\lambda}(\gamma_0(s_1, \cdots,s_l))\leq c_{\Gamma}, \forall (s_1, \cdots,s_l)\in [1/R^2,1]^l
	$$
	and
	$$
	I_{\lambda}(\gamma_0(s_1, \cdots,s_l))= c_{\Gamma},\ \mbox{if}\ s_j=1/R, \forall j \in \Gamma. 
	$$
	Therefore,
	$$
	m_0=max\left\{ I_{\lambda}(u);u \in \gamma_0([1/R^2,1]^l)\setminus A_{\mu}^{\lambda}\right\}< c_{\Gamma},
	$$
	and $m_0$ is independent of $\lambda$.
	
	Since there is $K_*$ such that
	$$
	\left|I_{\lambda}(u)-I_{\lambda}(v)\right|\leq K_*\|u-v\|_{\lambda,\Omega'_j},\ \forall u,v \in \overline{B}_{M+1}\ \mbox{and}\ \forall j \in \Gamma,
	$$ 
	we claim that if $T$ is large enough, the estimate below holds 
	\begin{equation} \label{38}
	\max_{(s_1,\cdots,s_l \in
		[1/R^2,1]^l)}I_{\lambda}\left(
	\beta(s_1,\cdots,s_l)\right)\leq
	\max\{m_0,c_{\Gamma}-\frac{1}{2K_*}\sigma_0\mu\}.
	\end{equation}
	Indeed, fix $u=\gamma_0(s_1,\cdots,s_l) \in E_{\lambda}.$ If $u
	\not \in A_{\mu}^{\lambda},$ 
	$$
	I_{\lambda}\left(
	\eta(t,u)\right)) \leq I_{\lambda}\left(
	\eta(0,u)\right))=I_{\lambda}(u)\leq m_0, \quad \forall t \geq 0.
	$$ 
	On the other hand, if $u \in A_{\mu}^{\lambda}$, by setting $\tilde{\eta}(t)=\eta(t,u),$ 
	$\tilde{d}_{\lambda}=\min\{d_{\lambda},\sigma_0\}$ and  
	$T=\frac{\sigma_0 \mu}{2 K_*d_{\lambda}}>0$,  we analyze the following cases: \\
	
	\noindent \textbf{Case 1:} $\tilde{\eta}(t) \in A_{3\mu/2}^{\lambda}, \forall
	t \in [0,T].$ \\
	
	\noindent \textbf{Case 2:} $\tilde{\eta}(t_0) \in \partial
	A_{3\mu/2}^{\lambda},\ \mbox{for some}\ t_0 \in [0,T].$\\

	\noindent {\bf Analysis of the Case 1:} \,  In this case,
	$$
	\Psi(\tilde{\eta}(t))\equiv 1, \quad \forall t \in [0,T]
	$$ 
	and 
	$$
	\| I'_{\lambda}(\tilde{\eta}(t))\| \geq \tilde{d}_{\lambda}, \forall t \in [0,T].
	$$
	Hence,
	$$
	I_{\lambda}(\tilde{\eta}(T))=I_{\lambda}(u)+\int_{0}^{T}\frac{d}{ds}I_{\lambda}(\tilde{\eta}(s))ds \leq c_{\Gamma}-\int_{0}^{T}\tilde{d}_{\lambda}ds,
	$$
	it follows that
	$$
	I_{\lambda}(\tilde{\eta}(T))\leq c_{\Gamma}-\tilde{d}_{\lambda}T=c_{\Gamma}-\frac{1}{2K_*}\sigma_0\mu.
	$$
	
	\noindent {\bf Analysis of the Case  2:} \, Let $0 \leq t_1 \leq t_2 \leq T$ satisfy 
	$\tilde{\eta}(t_1)\in \partial A_{\mu}^{\lambda},$
	$\tilde{\eta}(t_2)\in \partial A_{3\mu/2}^{\lambda}$ and
	$\tilde{\eta}(t)\in  A_{3\mu/2}^{\lambda}\setminus
	A_{\mu}^{\lambda}, \forall t\in [t_1,t_2].$ Then
	\begin{equation}\label{39}
	\|\tilde{\eta}(t_1)-\tilde{\eta}(t_2)\|\geq \frac{1}{2K_*}\mu.
	\end{equation}
	Indeed, denoting $w_1=\tilde{\eta}(t)$ and $w_2=\tilde{\eta}(t_2),$
	it follows that 
	$$
	\|w_2\|_{\lambda,\mathbb{R^N}\setminus
		\Omega'_{\Gamma}}=\frac{3}{2} \mu \quad \mbox{or} \quad 
	\left|I_{\lambda,j_0}(w_2)-c_{j_0}\right|=\frac{3}{2} \mu.
	$$
	From definition of $A_{\mu}^{\lambda},$ we have $\|w_2\|_{\lambda,\mathbb{R^N}\setminus\Omega'_{\Gamma}}\leq \mu. $ Thus,
	$$
	\|w_2-w_1\|_{\lambda} \geq \frac{1}{K_*}\left|I_{\lambda,j_0}(w_2)-I_{\lambda,j_0}(w_1)\right|\geq \frac{1}{2K_*}\mu.
	$$
	By Mean Value Theorem
	\begin{equation}\label{40}
	\|\tilde{\eta}(t_1)-\tilde{\eta}(t_2)\|_{\lambda}\le\left|\left|\dfrac{d\eta}{dt}\right|\right|\left| t_1-t_2\right|.
	\end{equation}
	As $\left|\left|\dfrac{d\eta}{dt}\right|\right| \le 1$, from (\ref{39}) and (\ref{40}),   
	$$
	\left| t_1-t_2\right| \ge \frac{1}{2K_*}\mu.
	$$
	Hence
	$$
	I_{\lambda}(\tilde{\eta}(T))\leq I_{\lambda}(u)-\int_{0}^{T}\Psi(\tilde{\eta}(s))\|I'_{\lambda}(\tilde{\eta}(s))\|ds,
	$$
	and so,
	$$
	I_{\lambda}(\tilde{\eta}(T))\leq c_{\Gamma}-\int_{t_1}^{t_2}\sigma_0ds \leq c_{\Gamma}-\frac{\sigma_0}{2K_*}\mu,
	$$
	proving (\ref{38}).
	
	Thereby,  
	$$
	b_{\lambda,\Gamma}\leq \max_{[1/R^2,1]^l}I_{\lambda}(\widehat{\eta}(s_1,\cdots,s_l))\leq \max\{m_0,c_{\Gamma}-\frac{1}{2K*}\sigma_0 \mu\} < c_{\Gamma},
	$$ 
	which is an absurd, because $b_{\lambda, \Gamma}\rightarrow c_{\Gamma},$ when $\lambda \rightarrow \infty.$
\end{proof}

\vspace{0.5 cm}

Thus, we can conclude that $I_{\lambda}$ has a critical point $u_{\lambda}$ in $A_{\mu}^{\lambda}$ for $\lambda$ large enough.

\vspace{0.5 cm}

\noindent \textbf{Completion of the Proof of Theorem \ref{T1}:}

\vspace{0.5 cm}

From the last proposition there exists $\{u_{\lambda_n}\}$ with
$\lambda_n \rightarrow +\infty$ satisfying:
$$I'_{\lambda_n}(u_{\lambda_n})=0,$$
$$
\|u_{\lambda_n}\|_{\lambda_n,\mathbb{R}^N\setminus \Omega'_{\Gamma}} \rightarrow 0
$$
and
$$
I_{\lambda_n,j}(u_{\lambda_n})\rightarrow c_j, \forall j \in \Gamma.
$$
Therefore, from Proposition \ref{p2},
$$
u_{\lambda_n} \rightarrow u \quad \mbox{in} \quad  H^2(\mathbb{R}^N)\ \mbox{with}\ u\ \in H_{0}^{2}(\Omega_{\Gamma}).
$$
Moreover, $u$ is a nontrivial solution of
\begin{equation} \label{PF}
		\left\{ \begin{array}{c}
		\Delta^2 u  +u  =  f(u),\mbox{in}\ \Omega_j \ \\
		u=\dfrac{\partial u}{\partial \eta} =0,\ \mbox{on}\  \partial\Omega_j,
		\end{array}
		\right.
		\end{equation}
with $I_j(u)=c_j$ for all $i \in \Gamma$.  Now, we claim that $u=0$ in $\Omega_j$, for all $j \notin \Gamma$. Indeed, it is possible to prove that there is $\sigma_1>0$, which is independent of $j$, such that if $v$ is a  nontrivial solution of (\ref{PF}), then
$$
\|v\|_{H_0^{2}(\Omega_j)} \geq \sigma_1.
$$
However, the solution $u$ verifies 
$$
\|u\|_{H^{2}(\mathbb{R}^{N} \setminus \Omega'_\Gamma)}=0,
$$
showing that $u=0$ in $\Omega_j$, for all $j \notin \Gamma$. This finishes the proof of Theorem \ref{T1}.

\vspace{0.2cm}


\begin{thebibliography}{99}
	
%	\bibitem {CA1}  C.O Alves, Existence of Multi-bump Solutions For a Class of Quasilinear Problems, Adv. Nonlinear Stud. \textbf{6}(2006), %491-509.
	
	\bibitem {A-M-S}C. O. Alves, D.C. De Morais Filho and M. A. S. Souto, Multiplicity of positive solutions for a class of problems with critical growth in $\mathbb{R}^N$, Proc. Edinb. Math. Soc. \textbf{52} (2009), 1-21.
	
%	\bibitem{AM}C.O. Alves and M.C. Ferreira, Multi-bump solutions for a class of quasilinear problems involving variable exponents, Ann. Mat. %Pura Appl. (4), DOI 10.1007/s10231-014-0434-2. 
	
	\bibitem{AS} C. O. Alves and M. A. S. Souto, Multiplicity of positive solutions for a
	class of problems with exponential critical growth in $\mathbb{R}^{2}$, J. Differential
	Equations \textbf{244} (2008), 1502-1520.
	
	\bibitem {B&W1}T. Bartsch and Z.Q. Wang, Existence and multiplicity results for some superlinear
	elliptic problems on $\mathbb{R}^N$, Comm. Partial Differential Equations \textbf{ 20 } (1995),  1725-1741.
	
	\bibitem {B&W2}T. Bartsch and Z.Q. Wang, Multiple positive solutions for a nonlinear Schr\"odinger
	equation, Z. Angew. Math. Phys. \textbf{ 51} (2000),   366-384.
	
	\bibitem{BG} E.Berchio and F. Gazzola, Positive solutions to a linearly perturbed critical growth biharmonic problem. Discrete Contin. Dyn. Syst. Ser. S \textbf{ 4} (2011),  809-823.
	
	\bibitem{BDF} D. Bucur and F. Gazzola, The first biharmonic Steklov eigenvalue: positivity preserving and shape optimization. Milan J. Math. \textbf{79} (2011), 247-258.
	
	\bibitem {Del&Felm}M. Del Pino and P.L. Felmer, Local mountain passes for semilinear elliptic problems in
	unbounded domains, Calc. Var. Partial Differential Equations, \textbf{ 4 } (1996), 121-137.
	
	\bibitem {D&T}Y.H. Ding  and K.Tanaka , Multiplicity of Positive Solutions of a Nonlinear
	Schr\"odinger Equation, Manuscripta Math. \textbf{112} (2003), 109-135.
	
	\bibitem{FG} A. Ferrero and F. Gazzola, A partially hinged rectangular plate as a model for suspension bridges. preprint (2013).
	
	\bibitem{G}F. Gazzola, H. Grunau and G. Sweers, Polyharmonic boundary value problems, Lectures notes in mathematics,1991. Springer-Verlag , Berlin, 2010.
	
	\bibitem{GK} C. P. Gupta and Y. C. Kwong, Biharmonic eigen-value problems and $L^ p$ estimates. Int. J. Math. Sci. \textbf{13} (1990), 469-480.
	
	\bibitem{JQ} T. Jung and Q-Heung Choi, Nonlinear biharmonic boundary value problem.  Bound. Value Probl. (2014), 2014:30.
	
	\bibitem{LM} A. C Lazer and P. J. McKenna, Large-amplitude periodic oscillations in suspension bridges: some new connections with nonlinear analysis. Siam Rev. \textbf{32} (1990),  537-578.
		
	\bibitem{P1} M.T.O. Pimenta and S.H.M. Soares, Existence and concentration of solutions for a class of biharmonic
	equations. J. Math. Anal. Appl. \textbf{390} (2012), 274-289. 

	\bibitem{P2} M.T.O. Pimenta and S.H.M. Soares, Singularly perturbed biharmonic problems with
	superlinear nonlinearities. Adv. Differential Equations \textbf{19} (2014), 31-50. 
	
	\bibitem{YT} Y. Ye and Chun-Lei Tang, Existence and multiplicity of solutions for fourth-order elliptic equations in $\mathbb{R}^N$.  J. Math. Anal. Appl. \textbf{406} (2013), 335-351. 

	\bibitem{ZWZ} W. Zhang, X. Tang and J. Zhang, Infinitely many solutions for fourth-order elliptic equations with sign-changing potential.  Taiwanese J. Math. \textbf{18} (2014), 645-659. 

	
	
	
		
	
	
	
	

	
		

	
		
	
	
\end{thebibliography}
\end{document}